\newtheorem{theorem}{Theorem}[section]
\newtheorem{corollary}[theorem]{Corollary}
\newtheorem{lemma}[theorem]{Lemma}
\theoremstyle{definition}
\newtheorem{example}[theorem]{Example}
\begin{document}

\begin{center}
{\Large\textbf{The Pattern Complexity of the 2-Dimensional \\[1ex] Paperfolding Sequence}}

\vspace{2ex}
Johan Nilsson
\end{center}

\begin{abstract}
	We present an exact formula for the number of distinct crease patterns in a square shaped region of a given size that appear in the 2 dimensional paperfolding structure.
\end{abstract}

\noindent{\small MSC2010 classification: 
05A15 Exact enumeration problems,
05B45 Tessellation and tiling problems, 
52C20 Tilings in 2 dimensions.}

\section{Introduction}

The classical paperfolding sequence is a well known sequence studied in many areas, such as combinatorics, computer science, and crystallography. This sequence is obtain by repeated folding of a long strip of paper in the middle, then unfolding it and reading of the different types of creases seen. Reading the creases from the left yields the paperfolding sequences, see \texttt{A014577} in \cite{oeis}. There are many versions and results around the paperfolding sequence, see \cite{allouch1995,allouch2023,dekking2012,dekking1982,gardner}, to mention some references. 

Ben-Abraham et al. \cite{ben} give a generalisation of the paperfolding sequence to higher dimensions via a recursive construction. Roughly descried, the 2-dimensional paperfolding sequence is obtained when a square shaped paper is folded in the middle, alternately between the axis parallel directions, and then reading of the crease structure seen when the paper is completely unfolded. In section~\ref{sec:prerequisites} we give a  detailed description of these folds, and state in detail the notions we use.

In this paper we focus on the complexity of the paperfoding structure in 2 dimensions. That is, we look at the number of crease patterns of a given size that occur in the paperfold structure. The complexity in the 1-dimensional case is given by Allouche in \cite{allouche1992}. The main result of this paper is the following theorem, which was conjectured in \cite{fgjn}. 

\begin{theorem}
	\label{thm:main}
	Let $A_n$ be the number of unique patterns of size $n \times n$ that occur in the 2-dimensional paperfolding sequence of infinte order. Then $A_1=4$, $A_2 = 68$, and 
	\begin{equation}
		\label{eq:AnSolution}
		A_n = 12\cdot n^2 + 24\cdot n \cdot 2^{\alpha} - 16\cdot 4^{\alpha} - 4,
	\end{equation}
	for $n \geq 3$, where $\alpha = \lfloor \log_2 (n-1)\rfloor$. 
\end{theorem}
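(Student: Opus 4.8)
\emph{Step 1: a hierarchical description of the structure.} The plan is to reduce the enumeration to a recognizability statement at the dyadic scale determined by $n$, and then to a finite case analysis. Write $P=2^{\alpha}$, so that $P<n\le 2P$ for $n\ge 3$. First I would extract, from the recursive folding of Section~\ref{sec:prerequisites}, that the two-dimensional paperfolding configuration $\mathcal P$ — a mountain/valley colouring of the edges of $\mathbb Z^2$ — carries a block structure at every scale $2^k$: the edges split into the level-$k$ fold lines (grid lines spaced $2^k$ apart, each carrying a colouring) and the interior edges of the $2^k\times 2^k$ tiles, and the colouring of one tile together with its level-$\ge k$ boundary is produced by $k$ applications of a fixed finite rule. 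The consequences I would isolate are: (a) every crease of level $<\alpha$ is doubly periodic with period $2^{\alpha+1}$; (b) an $n\times n$ window, having side $\le 2P$, meets only a bounded number of fold lines of level $\ge\alpha$ in each coordinate, at most one of which has level $>\alpha$; and (c) the restriction of $\mathcal P$ to a single generic row or column of creases is a one-dimensional paperfolding sequence, so Allouche's complexity $p(n)$ from \cite{allouche1992} is available as a black box.

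\emph{Step 2: recognizability at scale $P$.} The core of the argument is the lemma that an $n\times n$ pattern $w$ occurring in $\mathcal P$ is determined by, and conversely determines, the following bounded data: the offset of $w$ modulo $2^{\alpha}$ in each coordinate (which fixes the shallow, periodic part by (a)), together with the position, level, and colour of each deep (level $\ge\alpha$) fold line meeting $w$. I would prove both implications by induction on $\alpha$. For the forward direction, given $w$ one descends one level of the folding rule: $w$ is covered by a bounded number of level-$(\alpha-1)$ tile fragments whose relative placement and boundary colours are forced once the finite rule is known to be recognizable from a bounded window, after which the inductive hypothesis applies. The converse (surjectivity) — that every combinatorially consistent choice of shallow offset and deep-line data is actually realised in $\mathcal P$ — is handled symmetrically, assembling a legal preimage level by level. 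The base cases, $\alpha$ below a small explicit constant, are verified by machine enumeration.

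\emph{Step 3: the enumeration.} With the lemma in hand, partition the $n\times n$ patterns by the pair (number of deep horizontal fold lines, number of deep vertical fold lines), further refining each nonzero entry according to whether that line has level exactly $\alpha$ or level $>\alpha$ and recording its admissible offsets inside the window. By (a)–(c) the number of patterns in each cell of this finite partition factors as a horizontal count times a vertical count, each factor being either a sub-count of the one-dimensional complexity $p(n)$ (linear in $n$), the number of length-$n$ windows of a period-$2^{\alpha+1}$ one-dimensional pattern (a constant times $2^{\alpha}$), or a bounded colour choice. Summing the cells, inserting Allouche's value of $p(n)$, and simplifying collapses the total to $12n^2+24n\cdot 2^{\alpha}-16\cdot 4^{\alpha}-4$. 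The two boundary columns $n=2^{\alpha}+1$ and $n=2^{\alpha+1}$ of each dyadic block, as well as the exceptional values $A_1=4$ and $A_2=68$, are confirmed by direct inspection, which doubles as an independent numerical check of the formula for small $n$.

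\emph{The main obstacle.} The delicate step is Step~2: ruling out accidental coincidences — distinct position data giving the identical crease pattern — and, dually, showing that no consistent data is missing. These coincidences are exactly what lowers the complexity from the $\Theta(4^{\alpha})$ one would naively read off from counting translates of the period-$2^{\alpha+1}$ shallow pattern down to the true $\Theta(n^2)$, so controlling them precisely is what the whole theorem depends on; the inductive transfer across dyadic scales succeeds only because the finite folding rule is recognizable from a window of bounded size, and determining that size — hence fixing the base case of the induction — is where the real work lies.
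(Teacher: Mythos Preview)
Your plan is a genuinely different route from the paper's. The paper never attempts a direct enumeration at the scale $2^{\alpha}$; instead it works entirely with the substitution $\mu$ on the $16$-letter alphabet, partitions patterns by their residue mod~$2$ and mod~$4$ relative to the supertile grid (the sets $P_{i,j}$ and $Q_{i,j}$), proves these partitions are disjoint for $n\ge 3$, and then sets up a large but finite system of recursions for auxiliary counts $a_{i,j}(n),\,b_{i,j}(n),\,c_{i,j}(n)$. After simplification this collapses to the two-term recursion
\[
A_{2n}=4A_n+12,\qquad A_{2n+1}=2A_n+2A_{n+1}\quad(n\ge 3),
\]
and one checks by substitution that the closed form \eqref{eq:AnSolution} satisfies it. No appeal to the one-dimensional complexity is made.

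The weak point of your proposal is not Step~2 (which, suitably formulated, is essentially the recognizability the paper proves at scales $2$ and $4$ and then propagates recursively) but Step~3. You assert that within each cell of your partition the count \emph{factors} as a horizontal times a vertical contribution, with the factors given by sub-counts of Allouche's one-dimensional $p(n)$. This is where the argument is not on solid ground. The two-dimensional fold structure does not split as a product of one-dimensional ones: the colouring of a horizontal crease line depends on both coordinates (already visible in Figure~\ref{fig:recdefofSn}, where the central horizontal fold changes type across the vertical midline), and the $16$-letter substitution alphabet reflects a genuine coupling of the two directions. A naive product of one-dimensional counts would give a leading term $16n^{2}$, not $12n^{2}$, so whatever cancellation produces the correct coefficient must be identified explicitly; ``summing the cells \ldots\ and simplifying'' hides exactly the computation that carries the content of the theorem. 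Until you make precise what the cells are, what the factors are, and how they combine to give $12n^{2}+24n\cdot 2^{\alpha}-16\cdot 4^{\alpha}-4$, Step~3 is a hope rather than an argument.

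If you want to pursue this line, the cleanest target is the two-term recursion above: it encodes the self-similarity directly and is what your hierarchical picture should most naturally prove. Establishing it requires only recognizability at one dyadic step (your Step~2 with $\alpha$ replaced by $1$), together with a careful bookkeeping of how $n\times n$ windows sit across a $2\times 2$ block of supertiles --- which is precisely the finite case analysis the paper carries out.
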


The outline of this paper is as follows. In the next section we start by giving necessary definitions, and discuss how to generate the 2-dimensional paperfolding structure via substitution rules. Thereafter, in section~\ref{sec:patterns}, we turn to the question of defining what we mean by a \emph{pattern}, and look at the structure of sets of patterns. In the section thereafter we deduce a system of recursions describing the size of sets patterns. A solution to this system of recursion is presented in section~\ref{sec:proofofmainthm}, which then leads to the proof of Theorem~\ref{thm:main}.

\section{Prerequisites}
\label{sec:prerequisites}

In this section we present some background to the paperfoding structure in 2-dimensions, and we give some basic notation and fundamental definitions. 

The classical 1-dimensional paperfolding sequence is obtained, as briefly mention in the introduction, by repeatedly folding a long paper strip in the middle. By a fold we mean that we take the left end of the strip, and fold it over to the right, such that when unfolding there is a crease in the middle pointing downwards. When folding the strip over many times, and then when unfolding it completely we obtain a sequence of folds (sometimes called \emph{creases}), pointing upwards or downwards (also known as \emph{valley}- and \emph{mountain}-folds). The sequence of folds obtained is the so called paperfolding sequence.
 
In the 2-dimensional case, we swap the paper strip with an arbitrary large square paper. A fold in 2 dimensions is now made in two steps; first let the paper lay with its edges parallel to the normal coordinate system axis, with the paper's center at the origin. We start by taking the edge on the negative $x$-axis and fold in over to the positive $x$. Thereafter, the edge at the negative $y$ is folded over to the positive $y$. When unfolding the paper completely, we obtain a crease pattern on the paper, see Figure~\ref{fig:firstpaperfold}. As in the 1-dimensional case we make the assumption that we can fold the paper arbitrary many times before unfolding. 

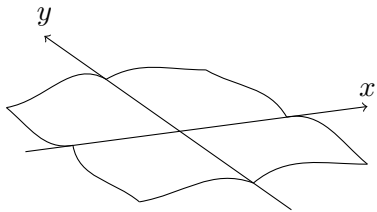
\begin{figure}[ht]
	\centering
	\begin{tikzpicture}[scale = 0.5, baseline={([yshift=-.8ex]current bounding box.center)}]
		\draw ( 4,0  ) to [out = 10, in = 160] ( 7, 0.5);
		\draw ( 7,0.5) to [out = 40, in = 180]  (10, 1);
		\draw (10,1)         to [out = 140, in = 10] (7.875, 2.25);
		\draw (7.875, 2.25) to [out = 120, in = 330]  (5.75,3.5);
		\draw (5.75,3.5)    to [out = 180, in =  30] (3.125, 3.25);
		\draw (3.125, 3.25) to [out = 150, in = 10]  (0.5,2.5);
		\draw (0.5,2.5) to      [out = 320, in = 190]  (2.25, 1.5);
		\draw (2.25, 1.5) to  [out = 280, in = 140]  (4,0);
		\draw[<-] (1.5, 4.4) node[above]{$y$} -- ( 8, -0.21);
		\draw[->] (1.0, 1.33)   -- (10.0, 2.53) node[above]{$x$};
	\end{tikzpicture}
	
	\caption{ The crease pattern obtained by folding a square paper, first from negative $x$ to positive $x$, followed by folding negative $y$ to positive $y$. When unfolding again, we obtain the crease pattern shown, with a mountain fold along the negative $x$, and valley folds
 along the other half-axis.}
 \label{fig:firstpaperfold}
\end{figure}

Let $\mathcal{S}_n$ denote the crease pattern obtained when folding (2-dimensional folds) a square paper $n$ times, and then unfolding it completely. We say that $\mathcal{S}_0$ is the unfolded paper. The crease pattern seen in Figure~\ref{fig:firstpaperfold} is the structure represented by $\mathcal{S}_1$. Let us denote the mountain folds with a filled dot and valley by circle, in the following way;
\begin{center}
	mountain
	\begin{tikzpicture}[scale = 0.5, baseline={([yshift=-.8ex]current bounding box.center)}]
		\draw (0,0) -- (0,1);
		\fill (0.0,0.5) circle (0.15);
	\end{tikzpicture}
	\qquad
	valley
	\begin{tikzpicture}[scale = 0.5, baseline={([yshift=-.8ex]current bounding box.center)}]
		\draw (0,0) -- (0,1);
		\draw (0.0,0.5) circle (0.15);
	\end{tikzpicture}
\end{center}

Let $\mathcal{S}_n^x$ denote the crease pattern of $\mathcal{S}_n$ reflected in the $x$-axis, (similar for $y$). Note that this operation swaps valley- and mountain folds. The crease pattern of $\mathcal{S}_n$ can now be described recursively via the structure given in  Figure~\ref{fig:recdefofSn}. This recursive definition was discussed in \cite{ben}. See Figure~\ref{fig:firstSnFold} for a visualization of the first $\mathcal{S}_n$s. 

\begin{figure}[ht]
	\centering
	\begin{tikzpicture}[scale = 0.5, baseline={([yshift=-.8ex]current bounding box.center)}]
		\draw[thin, color=gray, dotted] (-3,-3) rectangle (3,3);
		\draw (-3, 0) node[left]{$\mathcal{S}_{n} = $} -- (3,0);
		\draw (0, -3) -- (0, 3);
		\draw ( 1.5, 1.5) node{$\mathcal{S}_{n-1}$};
		\draw ( 1.5,-1.5) node{$\mathcal{S}_{n-1}^{x}$};
		\draw (-1.5, 1.5) node{$\mathcal{S}_{n-1}^{y}$};
		\draw (-1.5,-1.5) node{$\mathcal{S}_{n-1}^{xy}$};
		\fill (-1.5, 0.0 ) circle (0.15);
		\draw ( 1.5, 0.0) circle (0.15);
		\draw ( 0.0, 1.5) circle (0.15);
		\draw ( 0.0,-1.5) circle (0.15);
	\end{tikzpicture}
	\caption{The recursive structure of $\mathcal{S}_n$ for $n\geq1$, with $\mathcal{S}_0 = \emptyset$ (the unfolded paper).}
	\label{fig:recdefofSn}
\end{figure}

\begin{figure}[ht]
	\centering
	\begin{tabular}{cccc}
		$\emptyset$ 
		& \begin{tikzpicture}[scale = 0.5, baseline={([yshift=-.8ex]current bounding box.center)}]
	\draw[thin, color=gray, dotted] (0, 0) rectangle (2, 2);	
	\draw (0, 1) -- (2, 1);
	\draw (1, 0) -- (1, 2);
	\draw (1.0,1.5) circle (0.15);
	\fill (0.5,1.0) circle (0.15);
	\draw (1.5,1.0) circle (0.15);
	\draw (1.0,0.5) circle (0.15);
\end{tikzpicture}
		& \begin{tikzpicture}[scale = 0.5, baseline={([yshift=-.8ex]current bounding box.center)}]
	\draw[thin, color=gray, dotted] (0, 0) rectangle (4, 4);	
	\draw (0, 1) -- (4, 1);
	\draw (0, 2) -- (4, 2);
	\draw (0, 3) -- (4, 3);
	\draw (1, 0) -- (1, 4);
	\draw (2, 0) -- (2, 4);
	\draw (3, 0) -- (3, 4);
	\fill (1.0,3.5) circle (0.15);
	\draw (2.0,3.5) circle (0.15);
	\draw (3.0,3.5) circle (0.15);
	\fill (0.5,3.0) circle (0.15);
	\draw (1.5,3.0) circle (0.15);
	\fill (2.5,3.0) circle (0.15);
	\draw (3.5,3.0) circle (0.15);
	\fill (1.0,2.5) circle (0.15);
	\draw (2.0,2.5) circle (0.15);
	\draw (3.0,2.5) circle (0.15);
	\fill (0.5,2.0) circle (0.15);
	\fill (1.5,2.0) circle (0.15);
	\draw (2.5,2.0) circle (0.15);
	\draw (3.5,2.0) circle (0.15);
	\draw (1.0,1.5) circle (0.15);
	\draw (2.0,1.5) circle (0.15);
	\fill (3.0,1.5) circle (0.15);
	\draw (0.5,1.0) circle (0.15);
	\fill (1.5,1.0) circle (0.15);
	\draw (2.5,1.0) circle (0.15);
	\fill (3.5,1.0) circle (0.15);
	\draw (1.0,0.5) circle (0.15);
	\draw (2.0,0.5) circle (0.15);
	\fill (3.0,0.5) circle (0.15);
\end{tikzpicture}
		& \begin{tikzpicture}[scale = 0.5, baseline={([yshift=-.8ex]current bounding box.center)}]
	\clip( 0, -0.5) rectangle ( 8, 8);
	\draw[thin, color=gray, dotted] (0, 0) rectangle (8, 8);	
	\draw (0, 1) -- (8, 1);
	\draw (0, 2) -- (8, 2);
	\draw (0, 3) -- (8, 3);
	\draw (0, 4) -- (8, 4);
	\draw (0, 5) -- (8, 5);
	\draw (0, 6) -- (8, 6);
	\draw (0, 7) -- (8, 7);
	\draw (1, 0) -- (1, 8);
	\draw (2, 0) -- (2, 8);
	\draw (3, 0) -- (3, 8);
	\draw (4, 0) -- (4, 8);
	\draw (5, 0) -- (5, 8);
	\draw (6, 0) -- (6, 8);
	\draw (7, 0) -- (7, 8);
	\fill (1.0,7.5) circle (0.15);
	\fill (2.0,7.5) circle (0.15);
	\draw (3.0,7.5) circle (0.15);
	\draw (4.0,7.5) circle (0.15);
	\fill (5.0,7.5) circle (0.15);
	\draw (6.0,7.5) circle (0.15);
	\draw (7.0,7.5) circle (0.15);
	\fill (0.5,7.0) circle (0.15);
	\draw (1.5,7.0) circle (0.15);
	\fill (2.5,7.0) circle (0.15);
	\draw (3.5,7.0) circle (0.15);
	\fill (4.5,7.0) circle (0.15);
	\draw (5.5,7.0) circle (0.15);
	\fill (6.5,7.0) circle (0.15);
	\draw (7.5,7.0) circle (0.15);
	\fill (1.0,6.5) circle (0.15);
	\fill (2.0,6.5) circle (0.15);
	\draw (3.0,6.5) circle (0.15);
	\draw (4.0,6.5) circle (0.15);
	\fill (5.0,6.5) circle (0.15);
	\draw (6.0,6.5) circle (0.15);
	\draw (7.0,6.5) circle (0.15);
	\fill (0.5,6.0) circle (0.15);
	\fill (1.5,6.0) circle (0.15);
	\draw (2.5,6.0) circle (0.15);
	\draw (3.5,6.0) circle (0.15);
	\fill (4.5,6.0) circle (0.15);
	\fill (5.5,6.0) circle (0.15);
	\draw (6.5,6.0) circle (0.15);
	\draw (7.5,6.0) circle (0.15);
	\draw (1.0,5.5) circle (0.15);
	\fill (2.0,5.5) circle (0.15);
	\fill (3.0,5.5) circle (0.15);
	\draw (4.0,5.5) circle (0.15);
	\draw (5.0,5.5) circle (0.15);
	\draw (6.0,5.5) circle (0.15);
	\fill (7.0,5.5) circle (0.15);
	\draw (0.5,5.0) circle (0.15);
	\fill (1.5,5.0) circle (0.15);
	\draw (2.5,5.0) circle (0.15);
	\fill (3.5,5.0) circle (0.15);
	\draw (4.5,5.0) circle (0.15);
	\fill (5.5,5.0) circle (0.15);
	\draw (6.5,5.0) circle (0.15);
	\fill (7.5,5.0) circle (0.15);
	\draw (1.0,4.5) circle (0.15);
	\fill (2.0,4.5) circle (0.15);
	\fill (3.0,4.5) circle (0.15);
	\draw (4.0,4.5) circle (0.15);
	\draw (5.0,4.5) circle (0.15);
	\draw (6.0,4.5) circle (0.15);
	\fill (7.0,4.5) circle (0.15);
	\fill (0.5,4.0) circle (0.15);
	\fill (1.5,4.0) circle (0.15);
	\fill (2.5,4.0) circle (0.15);
	\fill (3.5,4.0) circle (0.15);
	\draw (4.5,4.0) circle (0.15);
	\draw (5.5,4.0) circle (0.15);
	\draw (6.5,4.0) circle (0.15);
	\draw (7.5,4.0) circle (0.15);
	\fill (1.0,3.5) circle (0.15);
	\draw (2.0,3.5) circle (0.15);
	\draw (3.0,3.5) circle (0.15);
	\draw (4.0,3.5) circle (0.15);
	\fill (5.0,3.5) circle (0.15);
	\fill (6.0,3.5) circle (0.15);
	\draw (7.0,3.5) circle (0.15);
	\fill (0.5,3.0) circle (0.15);
	\draw (1.5,3.0) circle (0.15);
	\fill (2.5,3.0) circle (0.15);
	\draw (3.5,3.0) circle (0.15);
	\fill (4.5,3.0) circle (0.15);
	\draw (5.5,3.0) circle (0.15);
	\fill (6.5,3.0) circle (0.15);
	\draw (7.5,3.0) circle (0.15);
	\fill (1.0,2.5) circle (0.15);
	\draw (2.0,2.5) circle (0.15);
	\draw (3.0,2.5) circle (0.15);
	\draw (4.0,2.5) circle (0.15);
	\fill (5.0,2.5) circle (0.15);
	\fill (6.0,2.5) circle (0.15);
	\draw (7.0,2.5) circle (0.15);
	\draw (0.5,2.0) circle (0.15);
	\draw (1.5,2.0) circle (0.15);
	\fill (2.5,2.0) circle (0.15);
	\fill (3.5,2.0) circle (0.15);
	\draw (4.5,2.0) circle (0.15);
	\draw (5.5,2.0) circle (0.15);
	\fill (6.5,2.0) circle (0.15);
	\fill (7.5,2.0) circle (0.15);
	\draw (1.0,1.5) circle (0.15);
	\draw (2.0,1.5) circle (0.15);
	\fill (3.0,1.5) circle (0.15);
	\draw (4.0,1.5) circle (0.15);
	\draw (5.0,1.5) circle (0.15);
	\fill (6.0,1.5) circle (0.15);
	\fill (7.0,1.5) circle (0.15);
	\draw (0.5,1.0) circle (0.15);
	\fill (1.5,1.0) circle (0.15);
	\draw (2.5,1.0) circle (0.15);
	\fill (3.5,1.0) circle (0.15);
	\draw (4.5,1.0) circle (0.15);
	\fill (5.5,1.0) circle (0.15);
	\draw (6.5,1.0) circle (0.15);
	\fill (7.5,1.0) circle (0.15);
	\draw (1.0,0.5) circle (0.15);
	\draw (2.0,0.5) circle (0.15);
	\fill (3.0,0.5) circle (0.15);
	\draw (4.0,0.5) circle (0.15);
	\draw (5.0,0.5) circle (0.15);
	\fill (6.0,0.5) circle (0.15);
	\fill (7.0,0.5) circle (0.15);
\end{tikzpicture} \\
		  $\mathcal{S}_0$
		& $\mathcal{S}_1$
		& $\mathcal{S}_2$
		& $\mathcal{S}_3$
	\end{tabular}
	\caption{The first crease patterns $\mathcal{S}_n$.}
	\label{fig:firstSnFold}
\end{figure}
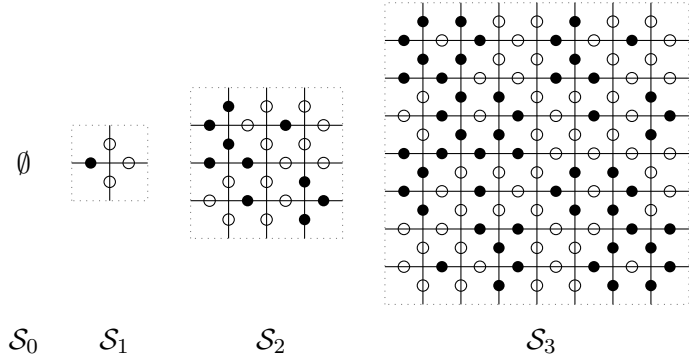

Our next step is to discuss how to generate the 2-dimensional paperfoding sequence via substitutions, as already considered in \cite{fgjn}. First, on the 16-letter alphabet $\mathcal{A} = \{\mathtt{A}, \mathtt{B},\mathtt{C},\ldots,\mathtt{P}\}$ we define the block substitution $\mu$ via 
\begin{equation}
\label{def:mu}    
\mu \,: \quad
\left\{
\text{\tt\small
\begin{tabular}{cccc}
	A $\mapsto$ \begin{tabular}{c} A F \\ G C \end{tabular}, & 
	B $\mapsto$ \begin{tabular}{c} A F \\ H D \end{tabular}, & 
	C $\mapsto$ \begin{tabular}{c} B E \\ G C \end{tabular}, & 
	D $\mapsto$ \begin{tabular}{c} B E \\ H D \end{tabular}, \\[3ex]
	E $\mapsto$ \begin{tabular}{c} A N \\ G K \end{tabular}, & 
	F $\mapsto$ \begin{tabular}{c} A N \\ H L \end{tabular}, & 
	G $\mapsto$ \begin{tabular}{c} B M \\ G K \end{tabular}, & 
	H $\mapsto$ \begin{tabular}{c} B M \\ H L \end{tabular}, \\[3ex]
	I $\mapsto$ \begin{tabular}{c} I F \\ O C \end{tabular}, & 
	J $\mapsto$ \begin{tabular}{c} I F \\ P D \end{tabular}, & 
	K $\mapsto$ \begin{tabular}{c} J E \\ O C \end{tabular}, & 
	L $\mapsto$ \begin{tabular}{c} J E \\ P D \end{tabular}, \\[3ex]
	M $\mapsto$ \begin{tabular}{c} I N \\ O K \end{tabular}, & 
	N $\mapsto$ \begin{tabular}{c} I N \\ P L \end{tabular}, & 
	O $\mapsto$ \begin{tabular}{c} J M \\ O K \end{tabular}, & 
	P $\mapsto$ \begin{tabular}{c} J M \\ P L \end{tabular}.
\end{tabular}}
\right.
\end{equation}
The second step, is to define the block substitution $\phi$ from $\mathcal{A}$ onto the 4-letter alphabet $\mathcal{B} = \{\mathtt{0}, \mathtt{1},\mathtt{2},\mathtt{3}\}$ via 
\begin{equation}
\label{def:phi}    
\phi \,: \quad
\left\{
\text{\tt\small
\begin{tabular}{cccc}
	A $\mapsto$ \begin{tabular}{c} 0 1 \\ 0 0 \end{tabular}, &
	B $\mapsto$ \begin{tabular}{c} 0 1 \\ 1 1 \end{tabular}, &
	C $\mapsto$ \begin{tabular}{c} 1 0 \\ 0 0 \end{tabular}, &
	D $\mapsto$ \begin{tabular}{c} 1 0 \\ 1 1 \end{tabular}, \\[3ex]
	E $\mapsto$ \begin{tabular}{c} 0 3 \\ 0 2 \end{tabular}, &
	F $\mapsto$ \begin{tabular}{c} 0 3 \\ 1 3 \end{tabular}, &
	G $\mapsto$ \begin{tabular}{c} 1 2 \\ 0 2 \end{tabular}, &
	H $\mapsto$ \begin{tabular}{c} 1 2 \\ 1 3 \end{tabular}, \\[3ex]
	I $\mapsto$ \begin{tabular}{c} 2 1 \\ 2 0 \end{tabular}, &
	J $\mapsto$ \begin{tabular}{c} 2 1 \\ 3 1 \end{tabular}, &
	K $\mapsto$ \begin{tabular}{c} 3 0 \\ 2 0 \end{tabular}, &
	L $\mapsto$ \begin{tabular}{c} 3 0 \\ 3 1 \end{tabular}, \\[3ex]
	M $\mapsto$ \begin{tabular}{c} 2 3 \\ 2 2 \end{tabular}, &
	N $\mapsto$ \begin{tabular}{c} 2 3 \\ 3 3 \end{tabular}, &
	O $\mapsto$ \begin{tabular}{c} 3 2 \\ 2 2 \end{tabular}, &
	P $\mapsto$ \begin{tabular}{c} 3 2 \\ 3 3 \end{tabular}. 
\end{tabular}
}
\right.
\end{equation}

An object of the form $\mu^n(x)$ where $x\in \mathcal{A}$ and $n \geq 0$ is called a \emph{supertile}. Here $\mu^n = \mu^{n-1}\circ \mu$ and $\mu^0 = Id$. Define the particular supertiles $T_n$ by $T_n := \mu^n(\mathtt{N})$, for $n\geq 0$. This definition can also be written as the block recursion 
\begin{equation}
	\label{eq:DefTnBlock}
	T_{n+1} =  \begin{array}{cc} \mu^n(\mathtt{I})  & T_n \\[1ex] \mu^n(\mathtt{P}) & \mu^n(\mathtt{L}) \end{array},
\end{equation}
and by $T$ we mean the limit of sequence of the $T_n$s. Based on $\mu$ and the $T_n$s, we let
\begin{equation}
	\label{eq:DefSnBlock}
	S_n := \phi( \mu^{n-1}(\mathtt{N}))  = \phi( T_{n-1})
\end{equation}
and $S = \phi( T )$. Note here that $T$ is made up of letters from the alphabet $\mathcal{A}$, while the letters in $S$ are from $\mathcal{B}$. See Figure~\ref{fig:firstSn} for an illustration of the initial $S_n$s. 

\begin{figure}[ht]
\centering
\begin{tabular}{cccc}
		{\small\tt\begin{tabular}{c} 2 3 \\ 3 3 \end{tabular}}
	& 	{\small\tt\begin{tabular}{c} 2 1 2 3 \\ 2 0 3 3 \\ 3 2 3 0 \\ 3 3 3 1 \end{tabular}}
	&	{\small\tt\begin{tabular}{c} 2 1 0 3 2 1 2 3 \\ 2 0 1 3 2 0 3 3 \\ 3 2 1 0 3 2 3 0 \\ 2 2 0 0 3 3 3 1 \\ 2 1 2 3 2 1 0 3 \\ 3 1 2 2 3 1 0 2 \\ 3 2 3 0 3 2 1 0 \\ 3 3 3 1 3 3 1 1 \end{tabular}} \\
		$S_1$
	&	$S_2$
	&	$S_3$
\end{tabular}
\caption{The first $S_i$ patterns.}
\label{fig:firstSn}
\end{figure}

Our final step to obtain the 2-dimensional paperfolding structure via a substitution rule is to decorate each letter in $\mathcal{B}$ with a crease pattern according to the following rule
\[
	\begin{tabular}{cccc}
{\tt 0} $\mapsto$
\begin{tikzpicture}[scale = 0.5, baseline={([yshift=-.8ex]current bounding box.center)}]
	\draw[cap = rect] (0, 1) -- (0, 0) --  (1, 0);
	\draw[thin, color=gray, dotted] (1, 0) -- (1, 1) -- (0, 1);
	\fill (0.0,0.5) circle (0.15);
	\fill (0.5,0.0) circle (0.15);
\end{tikzpicture}\, ,
&
{\tt 1} $\mapsto$
\begin{tikzpicture}[scale = 0.5, baseline={([yshift=-.8ex]current bounding box.center)}]
	\draw[cap = rect] (0, 1) -- (0, 0) --  (1, 0);
	\draw[thin, color=gray, dotted] (1, 0) -- (1, 1) -- (0, 1);
	\fill (0.0,0.5) circle (0.15);
	\draw (0.5,0.0) circle (0.15);
\end{tikzpicture}\, ,
&
{\tt 2} $\mapsto$
\begin{tikzpicture}[scale = 0.5, baseline={([yshift=-.8ex]current bounding box.center)}]
	\draw[cap = rect] (0, 1) -- (0, 0) --  (1, 0);
	\draw[thin, color=gray, dotted] (1, 0) -- (1, 1) -- (0, 1);
	\draw (0.0,0.5) circle (0.15);
	\fill (0.5,0.0) circle (0.15);
\end{tikzpicture}\, ,
&
{\tt 3} $\mapsto$
\begin{tikzpicture}[scale = 0.5, baseline={([yshift=-.8ex]current bounding box.center)}]
	\draw[cap = rect] (0, 1) -- (0, 0) --  (1, 0);
	\draw[thin, color=gray, dotted] (1, 0) -- (1, 1) -- (0, 1);
	\draw (0.0,0.5) circle (0.15);
	\draw (0.5,0.0) circle (0.15);
\end{tikzpicture}\, .
\end{tabular}
\]
In Figure~\ref{fig:firstSndec}, the decorations used on the initial $S_n$s are given. 

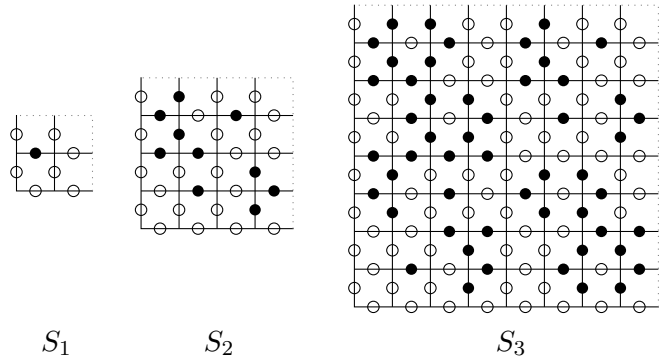
\begin{figure}[ht]
\centering
\begin{tabular}{cccc}
		\begin{tikzpicture}[scale = 0.5, baseline={([yshift=-.8ex]current bounding box.center)}]
	\draw[cap = rect] (0, 2) -- (0, 0) --  (2, 0);
	\draw[thin, color=gray, dotted] (2, 0) -- (2, 2) -- (0, 2);
	\draw (0, 1) -- (2, 1);
	\draw (1, 0) -- (1, 2);
	\draw (0.0,1.5) circle (0.15);
	\fill (0.5,1.0) circle (0.15);
	\draw (1.0,1.5) circle (0.15);
	\draw (1.5,1.0) circle (0.15);
	\draw (0.0,0.5) circle (0.15);
	\draw (0.5,0.0) circle (0.15);
	\draw (1.0,0.5) circle (0.15);
	\draw (1.5,0.0) circle (0.15);
\end{tikzpicture}
	& 	\begin{tikzpicture}[scale = 0.5, baseline={([yshift=-.8ex]current bounding box.center)}]
	\draw[cap = rect] (0, 4) -- (0, 0) --  (4, 0);
	\draw[thin, color=gray, dotted] (4, 0) -- (4, 4) -- (0, 4);
	\draw (0, 3) -- (4, 3);
	\draw (0, 2) -- (4, 2);
	\draw (0, 1) -- (4, 1);
	\draw (3, 0) -- (3, 4);
	\draw (2, 0) -- (2, 4);
	\draw (1, 0) -- (1, 4);
	\draw (0.0,3.5) circle (0.15);
	\fill (0.5,3.0) circle (0.15);
	\fill (1.0,3.5) circle (0.15);
	\draw (1.5,3.0) circle (0.15);
	\draw (2.0,3.5) circle (0.15);
	\fill (2.5,3.0) circle (0.15);
	\draw (3.0,3.5) circle (0.15);
	\draw (3.5,3.0) circle (0.15);
	\draw (0.0,2.5) circle (0.15);
	\fill (0.5,2.0) circle (0.15);
	\fill (1.0,2.5) circle (0.15);
	\fill (1.5,2.0) circle (0.15);
	\draw (2.0,2.5) circle (0.15);
	\draw (2.5,2.0) circle (0.15);
	\draw (3.0,2.5) circle (0.15);
	\draw (3.5,2.0) circle (0.15);
	\draw (0.0,1.5) circle (0.15);
	\draw (0.5,1.0) circle (0.15);
	\draw (1.0,1.5) circle (0.15);
	\fill (1.5,1.0) circle (0.15);
	\draw (2.0,1.5) circle (0.15);
	\draw (2.5,1.0) circle (0.15);
	\fill (3.0,1.5) circle (0.15);
	\fill (3.5,1.0) circle (0.15);
	\draw (0.0,0.5) circle (0.15);
	\draw (0.5,0.0) circle (0.15);
	\draw (1.0,0.5) circle (0.15);
	\draw (1.5,0.0) circle (0.15);
	\draw (2.0,0.5) circle (0.15);
	\draw (2.5,0.0) circle (0.15);
	\fill (3.0,0.5) circle (0.15);
	\draw (3.5,0.0) circle (0.15);
\end{tikzpicture}
	&	\begin{tikzpicture}[scale = 0.5, baseline={([yshift=-.8ex]current bounding box.center)}]
	\clip( -0.5, -0.5) rectangle ( 8.5, 8.5);
	\draw[cap = rect] (0, 8) -- (0, 0) --  (8, 0);
	\draw[thin, color=gray, dotted] (8, 0) -- (8, 8) -- (0, 8);
	\draw (0, 7) -- (8, 7);
	\draw (0, 6) -- (8, 6);
	\draw (0, 5) -- (8, 5);
	\draw (0, 4) -- (8, 4);
	\draw (0, 3) -- (8, 3);
	\draw (0, 2) -- (8, 2);
	\draw (0, 1) -- (8, 1);
	\draw (7, 0) -- (7, 8);
	\draw (6, 0) -- (6, 8);
	\draw (5, 0) -- (5, 8);
	\draw (4, 0) -- (4, 8);
	\draw (3, 0) -- (3, 8);
	\draw (2, 0) -- (2, 8);
	\draw (1, 0) -- (1, 8);
	\draw (0.0,7.5) circle (0.15);
	\fill (0.5,7.0) circle (0.15);
	\fill (1.0,7.5) circle (0.15);
	\draw (1.5,7.0) circle (0.15);
	\fill (2.0,7.5) circle (0.15);
	\fill (2.5,7.0) circle (0.15);
	\draw (3.0,7.5) circle (0.15);
	\draw (3.5,7.0) circle (0.15);
	\draw (4.0,7.5) circle (0.15);
	\fill (4.5,7.0) circle (0.15);
	\fill (5.0,7.5) circle (0.15);
	\draw (5.5,7.0) circle (0.15);
	\draw (6.0,7.5) circle (0.15);
	\fill (6.5,7.0) circle (0.15);
	\draw (7.0,7.5) circle (0.15);
	\draw (7.5,7.0) circle (0.15);
	\draw (0.0,6.5) circle (0.15);
	\fill (0.5,6.0) circle (0.15);
	\fill (1.0,6.5) circle (0.15);
	\fill (1.5,6.0) circle (0.15);
	\fill (2.0,6.5) circle (0.15);
	\draw (2.5,6.0) circle (0.15);
	\draw (3.0,6.5) circle (0.15);
	\draw (3.5,6.0) circle (0.15);
	\draw (4.0,6.5) circle (0.15);
	\fill (4.5,6.0) circle (0.15);
	\fill (5.0,6.5) circle (0.15);
	\fill (5.5,6.0) circle (0.15);
	\draw (6.0,6.5) circle (0.15);
	\draw (6.5,6.0) circle (0.15);
	\draw (7.0,6.5) circle (0.15);
	\draw (7.5,6.0) circle (0.15);
	\draw (0.0,5.5) circle (0.15);
	\draw (0.5,5.0) circle (0.15);
	\draw (1.0,5.5) circle (0.15);
	\fill (1.5,5.0) circle (0.15);
	\fill (2.0,5.5) circle (0.15);
	\draw (2.5,5.0) circle (0.15);
	\fill (3.0,5.5) circle (0.15);
	\fill (3.5,5.0) circle (0.15);
	\draw (4.0,5.5) circle (0.15);
	\draw (4.5,5.0) circle (0.15);
	\draw (5.0,5.5) circle (0.15);
	\fill (5.5,5.0) circle (0.15);
	\draw (6.0,5.5) circle (0.15);
	\draw (6.5,5.0) circle (0.15);
	\fill (7.0,5.5) circle (0.15);
	\fill (7.5,5.0) circle (0.15);
	\draw (0.0,4.5) circle (0.15);
	\fill (0.5,4.0) circle (0.15);
	\draw (1.0,4.5) circle (0.15);
	\fill (1.5,4.0) circle (0.15);
	\fill (2.0,4.5) circle (0.15);
	\fill (2.5,4.0) circle (0.15);
	\fill (3.0,4.5) circle (0.15);
	\fill (3.5,4.0) circle (0.15);
	\draw (4.0,4.5) circle (0.15);
	\draw (4.5,4.0) circle (0.15);
	\draw (5.0,4.5) circle (0.15);
	\draw (5.5,4.0) circle (0.15);
	\draw (6.0,4.5) circle (0.15);
	\draw (6.5,4.0) circle (0.15);
	\fill (7.0,4.5) circle (0.15);
	\draw (7.5,4.0) circle (0.15);
	\draw (0.0,3.5) circle (0.15);
	\fill (0.5,3.0) circle (0.15);
	\fill (1.0,3.5) circle (0.15);
	\draw (1.5,3.0) circle (0.15);
	\draw (2.0,3.5) circle (0.15);
	\fill (2.5,3.0) circle (0.15);
	\draw (3.0,3.5) circle (0.15);
	\draw (3.5,3.0) circle (0.15);
	\draw (4.0,3.5) circle (0.15);
	\fill (4.5,3.0) circle (0.15);
	\fill (5.0,3.5) circle (0.15);
	\draw (5.5,3.0) circle (0.15);
	\fill (6.0,3.5) circle (0.15);
	\fill (6.5,3.0) circle (0.15);
	\draw (7.0,3.5) circle (0.15);
	\draw (7.5,3.0) circle (0.15);
	\draw (0.0,2.5) circle (0.15);
	\draw (0.5,2.0) circle (0.15);
	\fill (1.0,2.5) circle (0.15);
	\draw (1.5,2.0) circle (0.15);
	\draw (2.0,2.5) circle (0.15);
	\fill (2.5,2.0) circle (0.15);
	\draw (3.0,2.5) circle (0.15);
	\fill (3.5,2.0) circle (0.15);
	\draw (4.0,2.5) circle (0.15);
	\draw (4.5,2.0) circle (0.15);
	\fill (5.0,2.5) circle (0.15);
	\draw (5.5,2.0) circle (0.15);
	\fill (6.0,2.5) circle (0.15);
	\fill (6.5,2.0) circle (0.15);
	\draw (7.0,2.5) circle (0.15);
	\fill (7.5,2.0) circle (0.15);
	\draw (0.0,1.5) circle (0.15);
	\draw (0.5,1.0) circle (0.15);
	\draw (1.0,1.5) circle (0.15);
	\fill (1.5,1.0) circle (0.15);
	\draw (2.0,1.5) circle (0.15);
	\draw (2.5,1.0) circle (0.15);
	\fill (3.0,1.5) circle (0.15);
	\fill (3.5,1.0) circle (0.15);
	\draw (4.0,1.5) circle (0.15);
	\draw (4.5,1.0) circle (0.15);
	\draw (5.0,1.5) circle (0.15);
	\fill (5.5,1.0) circle (0.15);
	\fill (6.0,1.5) circle (0.15);
	\draw (6.5,1.0) circle (0.15);
	\fill (7.0,1.5) circle (0.15);
	\fill (7.5,1.0) circle (0.15);
	\draw (0.0,0.5) circle (0.15);
	\draw (0.5,0.0) circle (0.15);
	\draw (1.0,0.5) circle (0.15);
	\draw (1.5,0.0) circle (0.15);
	\draw (2.0,0.5) circle (0.15);
	\draw (2.5,0.0) circle (0.15);
	\fill (3.0,0.5) circle (0.15);
	\draw (3.5,0.0) circle (0.15);
	\draw (4.0,0.5) circle (0.15);
	\draw (4.5,0.0) circle (0.15);
	\draw (5.0,0.5) circle (0.15);
	\draw (5.5,0.0) circle (0.15);
	\fill (6.0,0.5) circle (0.15);
	\draw (6.5,0.0) circle (0.15);
	\fill (7.0,0.5) circle (0.15);
	\draw (7.5,0.0) circle (0.15);
\end{tikzpicture} \\
		$S_1$
	&	$S_2$
	&	$S_3$
\end{tabular}
\caption{The first $S_i$ patterns decorated with creases.}
\label{fig:firstSndec}
\end{figure}

From Figure~\ref{fig:firstSnFold} and Figure~\ref{fig:firstSndec} we see that $S_n$ represents the upper right quadrant of $\mathcal{S}_{n+1}$.

\section{Sets of Patterns}
\label{sec:patterns}

In this section we turn our attention to defining what we mean by a pattern, and look at sets of patterns and the structure of such sets. 

Recall the definition of the $T_n$, and $S_n$ from \eqref{eq:DefTnBlock} and \eqref{eq:DefSnBlock}. We can see these structures (the $T_n$s and $S_n$s) as matrices on their respective alphabet. According to the language used in the field of tilings, we say that submatrices of the $T_n$s and $S_n$s are called \emph{patterns} or \emph{subpatterns}. That is, a pattern is rectangular and without holes. (In the literature the term \emph{patch} (see \cite{BG13,FHG}) is also commonly used for this.) Clearly, any $T_n$ or $S_n$ is also a pattern. For completeness, we also say that $T$ and $S$ are also  patterns (infinite ones). We also adopt the notations used for matrices, with rows and columns. This means we can describe a finite pattern $X$ via its entries, that is, $X_{r,c}$ is the entry in $X$ at row $r$ and column $c$.

For a pattern $X$ (finite or infinite), we let $P(X, m\times n)$, where $m$ and $n$ are positive integers, be the set of all $m\times n$ patterns that occur somewhere in $X$. In the case of $X$ being a finite pattern, we use the notation $X[r, c, m \times n]$ to denote the $m \times n$ subpattern of $X$ that has its upper left corner at row $r$ and column $c$ in $X$. The notation $|\cdot|$ denotes the cardinality of a set. Note here that the quantity 
\begin{equation}
\label{def:An}	
	A_n := |P(S, n\times n)|
\end{equation}
is what we mean by the number of patterns of size $n\times n$ in Theorem~\ref{thm:main}. With the help of the just introduced notations, we can now state a first result on the sets of patterns, implied by the block recursion given in \eqref{eq:DefTnBlock}. 

\begin{lemma}
	\label{lemma:inclusion}
	Let $n\geq 0$. Then $T_n \in P(T_{n+1}, 2^n\times 2^n)$.\hfill$\square$ 
\end{lemma}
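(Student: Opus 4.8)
The plan is to read the claim off directly from the block recursion \eqref{eq:DefTnBlock}, with essentially no extra work beyond unwinding the definitions. First I would recall that $T_{n+1} = \mu^{n+1}(\mathtt{N})$ and rewrite this as $\mu^n\bigl(\mu(\mathtt{N})\bigr)$. Evaluating the inner application using \eqref{def:mu} gives the $2\times 2$ block array $\left(\begin{smallmatrix}\mathtt{I}&\mathtt{N}\\\mathtt{P}&\mathtt{L}\end{smallmatrix}\right)$. Since $\mu^n$ acts on a block array by replacing each letter $x$ with its $\mu^n$-image — a $2^n\times 2^n$ supertile — and concatenating the images according to the original layout, the array $T_{n+1}$ decomposes into four $2^n\times 2^n$ quadrants, namely $\mu^n(\mathtt{I})$, $\mu^n(\mathtt{N})$, $\mu^n(\mathtt{P})$, $\mu^n(\mathtt{L})$, positioned exactly as in \eqref{eq:DefTnBlock}.

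Next I would observe that the upper-right quadrant of $T_{n+1}$ is precisely $\mu^n(\mathtt{N}) = T_n$, and that this quadrant is a contiguous, hole-free rectangular sub-block of $T_{n+1}$, hence a pattern in the sense fixed earlier in the paper. In the matrix notation introduced above this reads $T_n = T_{n+1}[\,1,\ 2^n+1,\ 2^n\times 2^n\,]$, and therefore $T_n \in P(T_{n+1}, 2^n\times 2^n)$, which is the assertion of the lemma. I would also note that the argument is uniform in $n$: the case $n=0$ merely says that $T_0 = \mathtt{N}$ occurs as the $(1,2)$-entry of the $2\times 2$ array $T_1 = \mu(\mathtt{N})$, so no separate base case or induction is needed.

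The only point that deserves a sentence of care is the bookkeeping of the block-substitution convention — that $\mu^n$ applied to a $2\times 2$ array yields the $2^{n+1}\times 2^{n+1}$ array obtained by substituting blockwise, so that the four quadrants line up exactly as displayed in \eqref{eq:DefTnBlock} — together with the reminder that a subpattern is required to be a rectangular hole-free block, which the quadrant manifestly is. Beyond that I expect no real obstacle: the statement is an immediate consequence of the recursive definition of the $T_n$.
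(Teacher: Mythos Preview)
Your argument is correct and is exactly the approach the paper intends: the lemma is stated with a $\square$ and no proof, the text immediately preceding it noting that the result is ``implied by the block recursion given in \eqref{eq:DefTnBlock}'', which is precisely what you spell out. Your identification of $T_n$ as the upper-right $2^n\times 2^n$ quadrant of $T_{n+1}$ via $T_{n+1}=\mu^n(\mu(\mathtt{N}))$ is the intended one-line justification.
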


The Lemma~\ref{lemma:inclusion} implies that the chain of nested sets of subpatterns,
\[
   P(T_0,m \times m) \subseteq \cdots  \subseteq P(T_n,m \times m) \subseteq P(T_{n+1},m \times m) \subseteq \cdots, 
\]
is monotonic including in $n$, (if $m\leq2^n$). Next, we show that this chain is strictly monotonic including until all possible subpatterns are contained. 

\begin{lemma}
	\label{lemma:uniqueplateau}
	Let $m\geq1$. If there is an $n\geq0$ such that 
	\begin{equation}
		\label{eq:plateauassumption}
		P(T_{n}, m \times m) = P(T_{n+1}, m \times m),
	\end{equation}
	with $m\leq 2^n$, then 
	\begin{equation}
		\label{eq:plateauresult}
		P(T_{n}, m \times m) = P(T_{n+k}, m \times m)
	\end{equation}
	for all integers $k \geq 1$, and in particular $P(T_{n}, m \times m) = P(T, m \times m)$. 
\end{lemma}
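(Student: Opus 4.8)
The plan is to isolate a single induction step and then pass to the limit. Under the standing hypothesis $m\le 2^n$, I would first establish the implication: if $P(T_n,m\times m)=P(T_{n+1},m\times m)$, then also $P(T_{n+1},m\times m)=P(T_{n+2},m\times m)$. Since $m\le 2^n\le 2^{n+1}$, the conclusion has exactly the same form with $n$ replaced by $n+1$, so the implication can be iterated, and an induction on $k\ge 1$ then yields \eqref{eq:plateauresult}. For the final assertion, recall that $T$ is the limit of the $T_j$: on the one hand each $T_j$ occurs in $T$ (iterate Lemma~\ref{lemma:inclusion}), and on the other hand every finite subpattern of $T$ already occurs in $T_j$ for $j$ large enough, so $P(T,m\times m)=\bigcup_{j\ge 0}P(T_j,m\times m)$; by \eqref{eq:plateauresult} this union is constant from $j=n$ on, whence $P(T_n,m\times m)=P(T,m\times m)$.

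To prove the implication, the inclusion $P(T_{n+1},m\times m)\subseteq P(T_{n+2},m\times m)$ is immediate from Lemma~\ref{lemma:inclusion} (applicable as $m\le 2^{n+1}$), so the content is the reverse inclusion. The mechanism is that $T_{n+2}=\mu(T_{n+1})$ (directly from the definition of the $T_j$) and that $\mu$ in \eqref{def:mu} is a $2\times 2$ block substitution, so $\mu(T_{n+1})$ is partitioned, along a grid of mesh $2$, into the $2\times 2$ blocks $\mu(x)$, $x\in\mathcal{A}$. Hence an arbitrary $m\times m$ window $X$ of $T_{n+2}$ overlaps at most $m':=\lfloor m/2\rfloor+1$ of these blocks in each direction, so $X$ is contained in $\mu(Y)$ for a suitable $m'\times m'$ subpattern $Y$ of $T_{n+1}$ (if $X$ meets fewer than $m'$ blocks along some axis, pad $Y$ with neighbouring cells of $T_{n+1}$, which is possible since $m'\le m\le 2^{n+1}$); the location of $X$ inside $\mu(Y)$ depends only on the parities of the corner coordinates of $X$.

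Now the hypothesis enters. Since $m'\le m$, extend $Y$ to an $m\times m$ subpattern $Y'$ of $T_{n+1}$ (again possible since $m\le 2^{n+1}$); then $Y'\in P(T_{n+1},m\times m)=P(T_n,m\times m)$, so $Y'$, and hence also $Y$, occurs already in $T_n$. Applying $\mu$ cellwise, $\mu(Y)$ occurs in $\mu(T_n)=T_{n+1}$, and therefore $X\subseteq\mu(Y)$ occurs in $T_{n+1}$, i.e.\ $X\in P(T_{n+1},m\times m)$. As $X$ was an arbitrary $m\times m$ window of $T_{n+2}$, this gives $P(T_{n+2},m\times m)\subseteq P(T_{n+1},m\times m)$, completing the step.

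I expect the principal obstacle to be not structural but the size bookkeeping: one must check that $m'=\lfloor m/2\rfloor+1\le m$ — this is what makes it legitimate to invoke the hypothesis, which is stated only at window size $m$, for the pulled-back pattern $Y$ — and that every window used genuinely lies inside the $T_j$ it is attributed to, which is precisely where $m\le 2^n$ is needed and why $m\le 2^{n+1}$ must be re-checked in order to iterate. The purely geometric fact that an $m\times m$ window of a $2\times 2$-block-substituted array lies in the $\mu$-image of an $m'\times m'$ window is evident but deserves a short verification via the parity of the window offset against the mesh-$2$ grid; the boundary case $m=1$, where $m'=1=m$, is degenerate and requires no separate treatment.
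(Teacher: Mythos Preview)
Your argument is correct and is essentially the same as the paper's: both pull back an $m\times m$ window of $T_{n+k+1}=\mu(T_{n+k})$ to a subpattern of $T_{n+k}$ of size at most $m\times m$, invoke the inductive hypothesis to locate it already in $T_{n+k-1}$, and then push forward by $\mu$ into $T_{n+k}$. You are simply more explicit than the paper about the size bookkeeping (the bound $m'=\lfloor m/2\rfloor+1\le m$, the padding to an $m\times m$ window, and the passage to the limit $T$), whereas the paper compresses all of this into the single sentence ``there is a pattern $b\in P(T_{n+p},m\times m)$ such that $a$ is a subpattern of $\mu(b)$''.
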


\begin{proof}
We give a proof by induction on $k$ in \eqref{eq:plateauresult}. The basis case, $k = 1$, is direct from the assumption \eqref{eq:plateauassumption}. Assume for induction that \eqref{eq:plateauresult} holds for $1\leq k \leq p$. 
	
For the induction step, $k = p+1$, consider a pattern $a \in P(T_{n+p+1}, m \times m)$. Then there is a pattern $b \in P(T_{n+p}, m\times m)$ such that $a$ is a subpattern of $\mu(b)$. By the induction assumption we have that $b \in P(T_{n+p-1}, m \times m)$. This implies
\[  
	a \in P(\mu(b),m\times m) \subseteq P(T_{n+p}, m\times m).
\]
Therefore $P(T_{n+p}, m \times m) \supseteq P(T_{n+p+1}, m \times m)$, and by Lemma~\ref{lemma:inclusion} it follows that 
\[
	P(T_{n+p}, m\times m) = P(T_{n+p+1}, m\times m),
\]
which completes the induction. 
\end{proof}

\begin{example}
\label{ex:patterncount}
By an enumeration we find
\[
	P(T_5, 2 \times 2) = P(T_6, 2\times 2),
\]
with $|P(T_5, 2\times 2)| = 76$. Lemma~\ref{lemma:uniqueplateau} now implies that $P(T_5, 2\times 2) = P(T, 2\times 2)$, so we can find all $2\times 2$ patterns in $T$ by just looking at patterns in $T_5$. In the same way, continuing the enumeration and applying Lemma~\ref{lemma:uniqueplateau}, we find 
\[  
	P(T_6, 4\times 4) = P(T_7, 4\times4) =  P(T, 4\times4),
\]
with $|P(T, 4\times 4)| = 316$. As a consequence, we clearly also have $P(T_6, 3\times3) = P(T, 3\times3)$ without any further enumerations. This because $T_6$ contains all $4\times4$ patterns, and therefore it must also contain all $3\times3$ patterns. \hfill$\diamond$
\end{example}

The elements of the set $P(T,m \times n)$ can be split into sets depending on their element's position relative to the underlying structure of supertiles of size $2\times 2$. For $i,j\in\{1,2\}$ we define the sets  
\begin{equation}
\label{eq:defofPijT}
	P_{i,j}(T, m \times n) 
	:= \big\{ \mu(x)[i,j,m \times n] : x \in P(T, m \times n)\big\}. 
\end{equation}
The definition in \eqref{eq:defofPijT} can be extend to all positive indices via
\[
	P_{i+2s,j+2t}(T, m \times n) := P_{i,j}(T, m \times n),
\]
where $s,t \in \mathbb{N}$. Note that $P_{i,j}$ in \eqref{eq:defofPijT} is defined for patterns on the alphabet $\mathcal{A}$. Analogous to the definition in \eqref{eq:defofPijT} we define 
\[
	P_{i,j}(S, m \times n) 
	:= \big\{ \phi(x)[i,j,m \times n] : x \in P(T, m \times n)\big\} 
\]
for patterns on the alphabet $\mathcal{B}$. Here we can also extend the definition to any positive indices $s,t$ via
\[
	P_{i+2s,j+2t}(T, m \times n) := P_{i,j}(T, m \times n).
\]
From the definition in \eqref{eq:defofPijT} it is clear that 
\[
	P(T, m \times n) = \bigcup_{i,j\in\{1,2\}} P_{i,j}(T, m \times n),
\]
as any $x\in P(T, m \times n)$  must be in at least one $P_{i,j}(T, m \times n)$. Moreover, it is by construction also clear that each of the sets $P_{i,j}(T, m \times n)$ are non-empty.  
 
\begin{lemma}
\label{lemma:disjointPijT}
Let $m,n\geq1$. Then 
\begin{equation}
\label{eq:disjointPijT}
	P(T, m \times n) = \bigcup_{i,j\in\{1,2\}} P_{i,j}(T, m \times n), 
\end{equation}
and the sets in the union on the right hand side of \eqref{eq:disjointPijT} are non-empty and pairwise disjoint.
\end{lemma}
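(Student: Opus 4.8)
The equality \eqref{eq:disjointPijT} and the non-emptiness of the sets $P_{i,j}(T,m\times n)$ were already justified in the discussion preceding the statement, so my plan is to concentrate on the pairwise disjointness, and I would derive it from a rigidity property of the block substitution $\mu$ alone — nothing about $T$ itself is needed for this part. The key observation is that $\mu$ is \emph{position-coded}: for each corner $(i,j)\in\{1,2\}^2$ put
\[
	L_{i,j} := \bigl\{\, \mu(a)_{i,j} : a \in \mathcal{A} \,\bigr\},
\]
the set of letters that can occur at relative position $(i,j)$ inside a block $\mu(a)$. Reading \eqref{def:mu} off column by column gives
\[
	L_{1,1}=\{\mathtt A,\mathtt B,\mathtt I,\mathtt J\},\qquad
	L_{1,2}=\{\mathtt E,\mathtt F,\mathtt M,\mathtt N\},\qquad
	L_{2,1}=\{\mathtt G,\mathtt H,\mathtt O,\mathtt P\},\qquad
	L_{2,2}=\{\mathtt C,\mathtt D,\mathtt K,\mathtt L\},
\]
and the crucial point is that these four sets form a partition $\mathcal{A}=L_{1,1}\sqcup L_{1,2}\sqcup L_{2,1}\sqcup L_{2,2}$ of the alphabet into four blocks of four letters. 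This is only a finite check, but it is precisely the feature of $\mu$ on which the lemma rests.

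Granting the partition, I would define the map $\lambda\colon\mathcal{A}\to\{1,2\}^2$ sending a letter to the unique index $(i,j)$ with that letter in $L_{i,j}$, and then show: for $i,j\in\{1,2\}$ and any $a\in P_{i,j}(T,m\times n)$ one has $\lambda(a_{1,1})=(i,j)$. Indeed, write $a=\mu(x)[i,j,m\times n]$ with $x\in P(T,m\times n)$ (here only the facts that $x$ is a pattern on $\mathcal{A}$ and that $\mu(x)[i,j,m\times n]$ is well defined are used, the latter holding since $i,j\le 2\le\min(m,n)+1$ for $m,n\ge1$). The top-left entry of $a$ is then $a_{1,1}=\mu(x)_{i,j}$; as $1\le i,j\le 2$, this entry sits inside the top-left $2\times2$ block of $\mu(x)$, which is exactly the block $\mu(x_{1,1})$, so $a_{1,1}$ is the relative-$(i,j)$ letter of $\mu(x_{1,1})$ and hence lies in $L_{i,j}$. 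Disjointness is now immediate: if $a\in P_{i,j}(T,m\times n)\cap P_{i',j'}(T,m\times n)$ with $(i,j),(i',j')\in\{1,2\}^2$, then $(i,j)=\lambda(a_{1,1})=(i',j')$; together with \eqref{eq:disjointPijT} and the non-emptiness recalled above, this completes the proof.

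The only real obstacle is the combinatorial observation that the sixteen blocks $\mu(a)$ are arranged so that the four corner-letter-sets are pairwise disjoint — equivalently, that $\mu$ was designed so that a letter remembers its position modulo $2$ in the level-one supertile grid. Once that is verified, the remainder is bookkeeping; the one thing to be careful about is the innocuous-looking identification $a_{1,1}=\mu(x)_{i,j}$, i.e.\ that the corner of $\mu(x)[i,j,m\times n]$ really does live in the single block $\mu(x_{1,1})$ and is therefore governed by one letter.
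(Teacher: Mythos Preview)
Your proof is correct and follows essentially the same approach as the paper: both identify the four corner-letter sets $L_{i,j}=\{\mu(a)_{i,j}:a\in\mathcal{A}\}$, verify by inspection of \eqref{def:mu} that they partition $\mathcal{A}$, and conclude that the top-left entry of any pattern in $P_{i,j}(T,m\times n)$ determines $(i,j)$. Your write-up is more explicit about the bookkeeping (introducing $\lambda$ and carefully tracing $a_{1,1}=\mu(x_{1,1})_{i,j}$), but the underlying argument is identical.
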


\begin{proof}
The only non-clear part is the disjointedness. From the definition of $\mu$ in \eqref{def:mu}, we see that any 
letter $a\in \mathcal{A}$ occurs at one and only one position in the mapped blocks. That is, for $x\in\mathcal{A}$,
\[
	\mu(x)_{i,j} \in 
	\begin{cases}
	\{\mathtt{A}, \mathtt{B}, \mathtt{I}, \mathtt{J}\}, & \textnormal{if } (i,j) = (1,1), \\
	\{\mathtt{E}, \mathtt{F}, \mathtt{M}, \mathtt{N}\}, & \textnormal{if } (i,j) = (1,2), \\
	\{\mathtt{G}, \mathtt{H}, \mathtt{O}, \mathtt{P}\}, & \textnormal{if } (i,j) = (2,1), \\
	\{\mathtt{C}, \mathtt{D}, \mathtt{K}, \mathtt{L}\}, & \textnormal{if } (i,j) = (2,2).
	\end{cases}
\]
This implies that the $P_{i,j}(T, m\times n)$ are disjoint for $i,j \in \{1,2\}$. 
\end{proof}

\begin{example}
\label{ex:disjointPijS}
In Example~\ref{ex:patterncount} we saw that all patterns of size $4\times4 $ are found in $T_6$. This leads to 
that we can find the sets $P(S, 3 \times 3)$ via a finite enumeration. It is now straight forward to find the sets $P_{i,j}(S, 3 \times 3)$. We obtain for $n=3$,
\begin{equation}	
\label{eq:disjointPijS}
	P(S, n \times n) = \bigcup_{i,j\in\{1,2\}} P_{i,j}(S, n \times n), 
\end{equation}
where the sets in the union on the right hand side of \eqref{eq:disjointPijS} are non-empty and pairwise disjoint. In contrast to sets of patterns on $\mathcal{A}$, see Lemma~\ref{lemma:disjointPijT}, the equality in \eqref{eq:disjointPijS} does not hold for $n=1,2$. 
\hfill$\diamond$
\end{example}

\begin{lemma}
\label{lemma:disjointPijS}
Let $m,n\geq3$. Then 
\begin{equation}
	\label{eq:lemma:disjointPijS}
	P(S, m \times n) = \bigcup_{i,j\in\{1,2\}} P_{i,j}(S, m \times n), 
\end{equation}
and the sets in the union on the right hand side of \eqref{eq:lemma:disjointPijS} are non-empty and pairwise disjoint.
\end{lemma}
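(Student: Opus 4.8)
The plan is to reduce the statement for general $m,n\ge 3$ to its special case $m=n=3$, which has already been settled by the finite enumeration in Example~\ref{ex:disjointPijS}. The tool is an elementary dictionary between membership in $P_{i,j}(S,\cdot)$ and the position of an occurrence in $S$. Unwinding \eqref{eq:defofPijT} together with $S=\phi(T)$: a pattern $\pi$ lies in $P_{i,j}(S,m\times n)$ exactly when $\pi$ occurs in $S$ with upper left corner at some position $(\rho,\gamma)$ whose parities match $(i,j)$, i.e.\ $\rho$ is odd iff $i=1$ and $\gamma$ is odd iff $j=1$. Indeed, if $\pi=\phi(x)[i,j,m\times n]$ with $x=T[a,b,m\times n]$, then $\phi(x)=S[2a-1,2b-1,2m\times 2n]$, so $\pi=S[2a+i-2,\,2b+j-2,\,m\times n]$; conversely an occurrence of $\pi$ at $(\rho,\gamma)$ in $S$ arises this way from $a=\lceil\rho/2\rceil$, $b=\lceil\gamma/2\rceil$, since the window of $\phi(T[a,b,m\times n])$ then contains that occurrence. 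With this in hand, the set identity $P(S,m\times n)=\bigcup_{i,j\in\{1,2\}}P_{i,j}(S,m\times n)$ is immediate (every occurrence of every $m\times n$ pattern has a corner parity), and each $P_{i,j}(S,m\times n)$ is non-empty because $P(T,m\times n)\neq\emptyset$, $T$ being infinite. This part is entirely parallel to the treatment of $T$ just before Lemma~\ref{lemma:disjointPijT}.

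The real content is the pairwise disjointness, which I would prove by contradiction and passage to the $3\times 3$ corner. Suppose $\pi\in P_{i,j}(S,m\times n)\cap P_{i',j'}(S,m\times n)$ with $(i,j)\neq(i',j')$. By the dictionary above, $\pi$ occurs in $S$ at some $(\rho_1,\gamma_1)$ with corner parities $(i,j)$ and at some $(\rho_2,\gamma_2)$ with corner parities $(i',j')$. Since $m,n\ge 3$ the upper left $3\times 3$ subpattern $\pi':=\pi[1,1,3\times 3]$ is defined, and it occurs in $S$ at $(\rho_1,\gamma_1)$ and at $(\rho_2,\gamma_2)$ with the same two corner parities; concretely, if $\pi=\phi(T[a,b,m\times n])[i,j,m\times n]$ then $\pi'=\phi(T[a,b,3\times3])[i,j,3\times 3]$ with $T[a,b,3\times3]\in P(T,3\times3)$ (a subpattern of a pattern of $T$ is again a pattern of $T$), and likewise for $(i',j')$. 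Hence $\pi'\in P_{i,j}(S,3\times3)\cap P_{i',j'}(S,3\times3)$, contradicting the disjointness of the four sets $P_{i,j}(S,3\times 3)$ recorded in Example~\ref{ex:disjointPijS}. Therefore $(i,j)=(i',j')$, so the sets $P_{i,j}(S,m\times n)$ are pairwise disjoint, completing the proof.

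The main obstacle is the base case $m=n=3$, which is genuinely not formal: as Example~\ref{ex:disjointPijS} notes, the analogous equality fails for $n=1,2$, so disjointness at size $3\times3$ really does rest on the finite computation — enumerate $P(T,3\times3)$ (available from $P(T_6,4\times4)=P(T,4\times4)$ in Example~\ref{ex:patterncount}), apply $\phi$, and verify that the $3\times3$ windows read off at the offsets $(1,1),(1,2),(2,1),(2,2)$ never coincide. Granting that, the argument above is purely combinatorial bookkeeping: no induction on $m,n$ is needed, only the reduction to the $3\times3$ corner.
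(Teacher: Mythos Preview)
Your proof is correct and follows essentially the same route as the paper: argue disjointness by contradiction, pass to the upper-left $3\times 3$ corner $\pi'=\pi[1,1,3\times 3]$, and invoke the finite enumeration of Example~\ref{ex:disjointPijS}. Your version is more explicit than the paper's (you spell out the parity dictionary and the verification that $\pi'\in P_{i,j}(S,3\times 3)$ via $y=x[1,1,3\times3]$), but the underlying argument is identical.
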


\begin{proof}
Assume for contradiction that there are $m,n\geq3$ and two distinct pair of indices $(i_1, j_1), (i_2,j_2) \in\{1,2\}^2$ such that there is a pattern 
\[
	x \in P_{i_1,j_1}(S, m \times n) \, \bigcup \, P_{i_2,j_2}(S, m \times n).
\]
Then the pattern $ x' = x[1,1, 3\times 3]$ must be in the intersection 
\[	P_{i_1,j_1}(S, 3 \times 3) \, \bigcup \, P_{i_2,j_2}(S, 3 \times 3),
\]
which contradicts what we saw in Example~\ref{ex:disjointPijS}.
\end{proof}

By looking at the definition of the substitutions $\mu$ and $\phi$ from \eqref{def:mu} and \eqref{def:phi} one sees that for any $a\in \mathcal{A}$ we have correspondences;
\[
\begin{array}{rcl}
	\begin{array}{rcl}
		\mu(a)_{1,1} = \mathtt{A} &\Leftrightarrow &\phi(a)_{1,1} = \mathtt{0}, \\
		\mu(a)_{1,1} = \mathtt{B} &\Leftrightarrow &\phi(a)_{1,1} = \mathtt{1}, \\
		\mu(a)_{1,1} = \mathtt{I} &\Leftrightarrow &\phi(a)_{1,1} = \mathtt{2},	\\
		\mu(a)_{1,1} = \mathtt{J} &\Leftrightarrow &\phi(a)_{1,1} = \mathtt{3},	\\
	\end{array}
	&&
	\begin{array}{rcl}
		\mu(a)_{1,2} = \mathtt{E} &\Leftrightarrow &\phi(a)_{1,2} = \mathtt{0}, \\
		\mu(a)_{1,2} = \mathtt{F} &\Leftrightarrow &\phi(a)_{1,2} = \mathtt{1}, \\
		\mu(a)_{1,2} = \mathtt{M} &\Leftrightarrow &\phi(a)_{1,2} = \mathtt{2},	\\
		\mu(a)_{1,2} = \mathtt{N} &\Leftrightarrow &\phi(a)_{1,2} = \mathtt{3},	\\
	\end{array}
\\
\\
	\begin{array}{rcl}
		\mu(a)_{2,1} = \mathtt{G} &\Leftrightarrow &\phi(a)_{2,1} = \mathtt{0}, \\
		\mu(a)_{2,1} = \mathtt{H} &\Leftrightarrow &\phi(a)_{2,1} = \mathtt{1}, \\
		\mu(a)_{2,1} = \mathtt{O} &\Leftrightarrow &\phi(a)_{2,1} = \mathtt{2},	\\
		\mu(a)_{2,1} = \mathtt{P} &\Leftrightarrow &\phi(a)_{2,1} = \mathtt{3},	\\
	\end{array}
	&&
	\begin{array}{rcl}
		\mu(a)_{2,2} = \mathtt{C} &\Leftrightarrow &\phi(a)_{2,2} = \mathtt{0}, \\
		\mu(a)_{2,2} = \mathtt{D} &\Leftrightarrow &\phi(a)_{2,2} = \mathtt{1}, \\
		\mu(a)_{2,2} = \mathtt{K} &\Leftrightarrow &\phi(a)_{2,2} = \mathtt{2},	\\
		\mu(a)_{2,2} = \mathtt{L} &\Leftrightarrow &\phi(a)_{2,2} = \mathtt{3}.	
	\end{array}
	\end{array}
\]
This implies the following lemma. 

\begin{lemma}
\label{lemma:muequalsphi}
Let $m,n\geq 1$, and $x,y\in P(T, m\times n)$. Then 
\[	\mu(x)[i,j, m\times n] = \mu(y)[i,j, m\times n]
\]
if and only if 
\[	\phi(x)[i,j, m\times n] = \phi(y)[i,j, m\times n],
\]
where $i,j\in{1,2}$. 
\end{lemma}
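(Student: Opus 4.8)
The statement follows almost directly from the displayed correspondences preceding it; the plan is to make precise the observation that, entry by entry, the pattern $\phi(x)[i,j,m\times n]$ is obtained from $\mu(x)[i,j,m\times n]$ by applying a bijection of $\mathcal{B}$-letters onto $\mathcal{A}$-letters that depends only on the position of the entry and not on $x$. Since a bijection is both injective and surjective, equality of the $\mu$-patterns then transfers to equality of the $\phi$-patterns, and conversely.

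First I would unwind the indexing. Fix $i,j\in\{1,2\}$; since $x\in P(T,m\times n)$ is an $m\times n$ pattern, $\mu(x)$ is a $2m\times 2n$ matrix, and because $m,n\geq 1$ the submatrix $\mu(x)[i,j,m\times n]$ is well defined. Because $\mu(x)$ is formed by replacing each entry $x_{p,q}$ with the $2\times 2$ block $\mu(x_{p,q})$, its entry in global row $R$ and global column $C$ equals $\mu(x_{p,q})_{i',j'}$, where $p=\lceil R/2\rceil$, $q=\lceil C/2\rceil$, $i'=R-2(p-1)\in\{1,2\}$ and $j'=C-2(q-1)\in\{1,2\}$; the identical decomposition with $\phi$ in place of $\mu$ gives $\phi(x)_{R,C}=\phi(x_{p,q})_{i',j'}$. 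The entry of $\mu(x)[i,j,m\times n]$ in local row $r$ and column $c$ is $\mu(x)_{\,i+r-1,\,j+c-1}$, so in this case $R=i+r-1$ and $C=j+c-1$, and hence the internal coordinates $i'$ and $j'$ are functions of $i,r$ and of $j,c$ alone — they do not depend on $x$ — and the same holds for $\phi(x)[i,j,m\times n]$.

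Next I would invoke the correspondences displayed just before the lemma. For each fixed $(i',j')\in\{1,2\}^2$ they exhibit a well-defined bijection $\psi_{i',j'}$ from the four-letter set $\{\mu(a)_{i',j'}:a\in\mathcal{A}\}$ onto $\mathcal{B}$ such that $\phi(a)_{i',j'}=\psi_{i',j'}\bigl(\mu(a)_{i',j'}\bigr)$ for every $a\in\mathcal{A}$. Combining this with the previous paragraph, for each local position $(r,c)$ there is a bijection $\Psi_{r,c}$ (namely $\psi_{i',j'}$ for the $i',j'$ determined by $i,j,r,c$) with
\[
\phi(x)[i,j,m\times n]_{r,c}\;=\;\Psi_{r,c}\bigl(\mu(x)[i,j,m\times n]_{r,c}\bigr)
\]
for every $x\in P(T,m\times n)$. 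The equivalence is then immediate: if $\mu(x)[i,j,m\times n]=\mu(y)[i,j,m\times n]$, applying $\Psi_{r,c}$ to each entry yields $\phi(x)[i,j,m\times n]=\phi(y)[i,j,m\times n]$; conversely, if the $\phi$-patterns agree, applying $\Psi_{r,c}^{-1}$ entrywise recovers equality of the $\mu$-patterns.

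\textbf{Expected obstacle.} There is no substantive obstacle; the only care required is the index bookkeeping confirming that the internal coordinates $(i',j')$, and hence the bijection applied at a given position, are independent of the letter being substituted. Alternatively I could present the argument without coordinates, simply noting that $\mu$ and $\phi$ agree, position by position, on which cell of the $2\times 2$ block each letter of $\mathcal{A}$ maps into, and that the resulting letter maps $\mathcal{A}\to\mathcal{B}$ are bijective on the relevant four-letter sets.
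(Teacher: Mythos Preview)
Your proof is correct and follows exactly the approach the paper intends: the paper simply lists the four position-wise correspondences between $\mu(a)_{i',j'}$ and $\phi(a)_{i',j'}$ and then states that these imply the lemma, leaving the reader to do precisely the entrywise bijection argument you have written out. Your contribution is the careful index bookkeeping (showing that $(i',j')$ at each local position depends only on $(i,j,r,c)$ and not on $x$), which the paper omits; there is one small slip in your opening sentence where you say ``a bijection of $\mathcal{B}$-letters onto $\mathcal{A}$-letters'' when the map $\Psi_{r,c}$ you actually construct goes the other way, from the relevant four-letter subset of $\mathcal{A}$ onto $\mathcal{B}$.
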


We can now state and conclude a main results of this section. 

\begin{theorem}
\label{thm:PTequalsPS}
Let $n\geq 3$. Then $|P(T, n\times n)| = |P(S, n\times n)|$.
\end{theorem}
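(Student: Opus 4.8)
The plan is to reduce the claim to a fibre‑counting argument on a single common index set, using only the three lemmas just proved: Lemma~\ref{lemma:disjointPijT}, Lemma~\ref{lemma:disjointPijS}, and Lemma~\ref{lemma:muequalsphi}. The crucial observation is that, by their very definitions, for each fixed pair $(i,j)\in\{1,2\}^2$ both $P_{i,j}(T,n\times n)$ and $P_{i,j}(S,n\times n)$ are images of \emph{the same} set $P(T,n\times n)$: the first under the map $x\mapsto\mu(x)[i,j,n\times n]$ (this is \eqref{eq:defofPijT}), the second under the map $x\mapsto\phi(x)[i,j,n\times n]$.

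First I would fix $n\geq3$ and $(i,j)\in\{1,2\}^2$ and name the two maps on $P(T,n\times n)$, say $f_{i,j}(x)=\mu(x)[i,j,n\times n]$ and $g_{i,j}(x)=\phi(x)[i,j,n\times n]$, whose images are exactly $P_{i,j}(T,n\times n)$ and $P_{i,j}(S,n\times n)$. Then I would apply Lemma~\ref{lemma:muequalsphi}, which states precisely that $f_{i,j}(x)=f_{i,j}(y)$ if and only if $g_{i,j}(x)=g_{i,j}(y)$, for all $x,y\in P(T,n\times n)$. Hence $f_{i,j}$ and $g_{i,j}$ induce the same partition of $P(T,n\times n)$ into fibres, so they have the same number of fibres, i.e.\ $|P_{i,j}(T,n\times n)|=|P_{i,j}(S,n\times n)|$.

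To finish, I would assemble the four equalities. By Lemma~\ref{lemma:disjointPijT} the set $P(T,n\times n)$ is the disjoint union of the $P_{i,j}(T,n\times n)$; by Lemma~\ref{lemma:disjointPijS}, and this is the only place the hypothesis $n\geq3$ is used, $P(S,n\times n)$ is the disjoint union of the $P_{i,j}(S,n\times n)$. Summing over $(i,j)\in\{1,2\}^2$ then gives
\[
  |P(T,n\times n)| \;=\; \sum_{i,j\in\{1,2\}}|P_{i,j}(T,n\times n)| \;=\; \sum_{i,j\in\{1,2\}}|P_{i,j}(S,n\times n)| \;=\; |P(S,n\times n)|.
\]

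I do not expect a real obstacle here, since the substantive content has already been isolated in the three lemmas; the argument is essentially bookkeeping. The only points needing care are not to conflate the two index sets (the set $P_{i,j}(S,\cdot)$ is obtained by applying $\phi$ to elements of $P(T,\cdot)$, not of $P(S,\cdot)$), and to note that $n\geq3$ enters exactly through the disjointness on the $S$‑side, which genuinely fails for $n=1,2$ by Example~\ref{ex:disjointPijS} — this is why the statement is restricted to $n\geq3$.
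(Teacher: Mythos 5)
Your proposal is correct and follows the same route as the paper, whose proof is simply the instruction to combine Lemma~\ref{lemma:disjointPijT}, Lemma~\ref{lemma:disjointPijS}, and Lemma~\ref{lemma:muequalsphi}; your fibre-counting argument just makes that combination explicit. The bookkeeping (equal fibres of the maps $x\mapsto\mu(x)[i,j,n\times n]$ and $x\mapsto\phi(x)[i,j,n\times n]$, then summing over the four disjoint pieces, with $n\geq3$ needed only for disjointness on the $S$-side) is exactly what the paper intends.
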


\begin{proof}
Combine Lemma~\ref{lemma:disjointPijT}, Lemma~\ref{lemma:disjointPijS}, and Lemma~\ref{lemma:muequalsphi}.
\end{proof}

The implication of the above Theorem~\ref{thm:PTequalsPS} is that in order to find the number of square patterns in $S$, as stated in the main Theorem~\ref{thm:main}, (see also \eqref{def:An}), we can look at patterns in $T$, if $n\geq3$.  

The sets $P_{i,j}$ can be split into sets depending on their element's position in the structure of supertiles. Similar to \eqref{eq:defofPijT} we define, for $i,j\in\{1,2,3,4\}$, the sets 
\begin{equation}
\label{eq:defofQijT}
	Q_{i,j}(T, m \times n) 
	:= \big\{ \mu^2(x)[i,j,m \times n] : x \in P(T, m \times n)\big\}. 
\end{equation}
The definition in \eqref{eq:defofQijT} can be extend to all positive indices via
\[
	Q_{i+4s,j+4t}(T, m \times n) := Q_{i,j}(T, m \times n),
\]
where $s,t \in \mathbb{N}$. By definition it is clear that 
\[
	P_{i,j}(T, m \times n) = \bigcup_{k,l\in\{0,2\}} Q_{i+k,j+l}(T, m \times n)
\]
where the sets on the right hand side are non-empty. 

\begin{lemma}
\label{lemma:disjointQij}
Let $m,n \geq 2$ and $i,j\in\{1,2\}$. Then 
\begin{equation}
\label{eq:disjointQij}
	P_{i,j}(T, m \times n) = \bigcup_{k,l\in\{0,2\}} Q_{i+k,j+l}(T, m \times n), 
\end{equation}
and the sets in the union on the right hand side of \eqref{eq:disjointQij} are non-empty and pairwise disjoint.
\end{lemma}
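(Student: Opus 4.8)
The equality in~\eqref{eq:disjointQij} and the non-emptiness of the four sets on its right-hand side were recorded in the paragraph just before the statement, so the plan is to prove only the pairwise disjointness, and the argument will run parallel to the proof of Lemma~\ref{lemma:disjointPijT}: there one recovers a letter's position inside a $\mu$-block from the letter itself, and here one recovers the position of a $2\times2$ sub-block inside a $\mu^2$-block from a $2\times2$ corner of the pattern --- a corner that is available precisely because $m,n\geq2$.

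First I would fix notation. For $a\in\mathcal A$ let $\pi(a)\in\{1,2\}^2$ denote the unique position at which $a$ occurs inside $\mu$-images; this is well defined by the four-way grouping of $\mathcal A$ exhibited in the proof of Lemma~\ref{lemma:disjointPijT}, so that $\pi(a)=(1,1),(1,2),(2,1),(2,2)$ according as $a\in\{\mathtt A,\mathtt B,\mathtt I,\mathtt J\},\{\mathtt E,\mathtt F,\mathtt M,\mathtt N\},\{\mathtt G,\mathtt H,\mathtt O,\mathtt P\},\{\mathtt C,\mathtt D,\mathtt K,\mathtt L\}$. Equivalently, for any array $z$ over $\mathcal A$, the pair $\pi(\mu(z)_{R,C})$ records the parities of $R$ and $C$ (odd corresponding to $1$, even to $2$); in particular, passing from a cell of $\mu(z)$ to the cell directly below it changes the first coordinate of $\pi$ to its complement in $\{1,2\}$ and leaves the second unchanged. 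From~\eqref{def:mu} I would then extract the two facts that do the work: (a) $\mu$ is injective on $\mathcal A$, so a full $2\times2$ block $\mu(a)$ determines $a$; and (b) the single top-right entry $\mu(a)_{1,2}$ already determines $\pi(a)$, namely $\mu(a)_{1,2}$ equals $\mathtt F,\mathtt E,\mathtt N,\mathtt M$ according as $\pi(a)=(1,1),(2,2),(1,2),(2,1)$.

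Now the core step. Suppose for contradiction that $P\in Q_{i+k,j+l}(T,m\times n)\cap Q_{i+k',j+l'}(T,m\times n)$ for distinct $(k,l),(k',l')\in\{0,2\}^2$, say $P=\mu^2(x)[i+k,j+l,m\times n]$ with $x\in P(T,m\times n)$. Since $i,j\in\{1,2\}$ and $k,l\in\{0,2\}$ we have $i+k,j+l\in\{1,2,3,4\}$, and writing $\mu^2(x)=\mu(\mu(x))$ the cell $P_{1,1}$ lies inside the $2\times2$ sub-block of $\mu^2(x)$ in sub-position $(1+k/2,\,1+l/2)$, i.e.\ inside the block $\mu(a)$ with $a=\mu(x)_{1+k/2,\,1+l/2}$, and it sits there at position $(i,j)$; hence $\pi(a)=(1+k/2,1+l/2)$, so $(k,l)$ is determined by $\pi(a)$. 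It then suffices to show $\pi(a)$ is forced by the pattern $P$, which I would do by cases on $(i,j)$. If $(i,j)=(1,1)$, then $P[1,1,2\times2]$ is exactly the block $\mu(a)$, so (a) recovers $a$. If $(i,j)=(1,2)$, then $P_{1,1}=\mu(a)_{1,2}$, so (b) gives $\pi(a)$ directly. If $(i,j)\in\{(2,1),(2,2)\}$, then $P_{1,1}$ lies in the bottom row of $\mu(a)$, and the top-right entry of the $2\times2$ block $\mu(b)$ lying directly below $\mu(a)$ in $\mu(\mu(x))$ is one of the cells $P_{2,1},P_{2,2}$ (both present since $m,n\geq2$); applying (b) to it gives $\pi(b)$, and the parity relation above gives $\pi(a)=(3-\pi(b)_1,\pi(b)_2)$. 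Running the identical determination on a second representation $P=\mu^2(x')[i+k',j+l',m\times n]$ yields the same value of $\pi$, so $(k',l')=(k,l)$, a contradiction; hence the four sets are pairwise disjoint.

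The whole argument is block-substitution bookkeeping except for fact (b) --- that a single entry of a $\mu$-image determines the $\mu$-position of its parent letter --- which is the one point to verify with care, being a finite inspection of the sixteen rules in~\eqref{def:mu}, and it is exactly this fact that makes a $2\times2$ window (rather than a larger one) sufficient. The hypothesis $m,n\geq2$ enters only to guarantee that the needed corner, and the cells $P_{2,1}$, $P_{2,2}$, are present, and one should expect the conclusion to fail when $m$ or $n$ equals $1$.
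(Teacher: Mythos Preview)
Your argument is correct. Both your proof and the paper's ultimately reduce the disjointness claim to the upper-left $2\times 2$ corner of a pattern, but you diverge from the paper at the base of that reduction: the paper simply verifies by finite enumeration that the sets $Q_{i+k,j+l}(T,2\times 2)$ are pairwise disjoint for each $(i,j)$, then restricts any putative overlap at size $m\times n$ to its $2\times 2$ corner to obtain a contradiction. You instead replace the enumeration by a structural argument, isolating the single observation (your fact~(b), that the top-right entry $\mu(a)_{1,2}\in\{\mathtt E,\mathtt F,\mathtt M,\mathtt N\}$ already determines $\pi(a)$) that explains \emph{why} the enumeration succeeds, and then carry out the four-way case split on $(i,j)$ explicitly. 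This buys a self-contained proof that does not appeal to a machine check and makes transparent exactly which feature of $\mu$ is responsible; the paper's approach is shorter on the page and avoids the case analysis at the cost of relying on an off-stage computation. Your handling of the cases $(i,j)\in\{(2,1),(2,2)\}$, passing to the block $\mu(b)$ one row down and reading $\pi(b)$ from $P_{2,1}$ or $P_{2,2}$, is where the hypothesis $m\geq 2$ enters for you in a slightly more visible way than in the paper (which just needs the $2\times2$ corner to exist); note that in your case $(i,j)=(1,1)$ you could equally well invoke fact~(b) on $P_{1,2}=\mu(a)_{1,2}$ rather than the stronger injectivity fact~(a), which would make the four cases more uniform.
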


\begin{proof}
A straight forward enumeration, similar to the one discussed in Example~\ref{ex:disjointPijS}, shows that \eqref{eq:disjointQij} holds for $m=n=2$, and the involved $Q$-sets on the right hand side are disjoint. Assume for contradiction that there are integers $m,n\geq 2$, an index  $(i,j)\in\{1,2\}^2$, and two distinct pair of indices $(k_1, l_1), (k_2,l_2) \in\{0,2\}^2$ such that there is a pattern 
\[
	x \in Q_{i+k_1,j+l_1}(T, m \times n) \, \bigcup \, Q_{i+k_2,j+l_2}(T, m \times n).
\]
Then the pattern $ x' = x[1,1, 2\times 2]$ must be in the intersection 
\[	Q_{i+k_1,j+l_1}(T, 2 \times 2) \, \bigcup \, Q_{i+k_2,j+l_2}(T, 2 \times 2),
\]
which contradicts the case seen in the enumeration mentioned in at the beginning of the proof.
\end{proof}

\section{Extensions}
\label{sec:extensions}

The aim of this section is to discuss bijective connections between sets of patterns. We start by introducing the following shorthand notation 
\begin{equation}
\label{eq:defabc}
\renewcommand{\arraystretch}{1.3}%
	\begin{array}{r@{\ }l}	 
		a_{i,j}(n) &:= |P_{i,j}(T,n \times n)|, \\
		b_{i,j}(n) &:= |P_{i,j}(T,n \times (n+1))|, \\
		c_{i,j}(n) &:= |P_{i,j}(T,(n+1) \times n)|.
	\end{array}
\end{equation}
 
\begin{example}
\label{ex:extendsion}
An enumeration shows that we have 
\[	|P_{2,1}(T,3\times3)| = |P_{1,1}(T,4\times4)|.
\]
That is, for any element $u \in P_{2,1}(T,3\times3)$ there is a unique $v\in P_{1,1}(T,4\times4)$ with $u = v[2,1,3\times3]$. See Figure~\ref{fig:extensionP} for an illustration of this extension of the pattern $u$ to $v$.  \hfill$\diamond$   
\end{example}

\begin{figure}[ht]
\centering
	\begin{tikzpicture}[scale = 0.3]
		\draw[color=black, fill=gray, fill opacity=0.3] (2, 2) rectangle ++ (4,4) ;
		\draw[color=black, fill=cyan, fill opacity=0.6] (2, 2) rectangle ++ (3,3);
		\draw[step=1, black, dotted]    (-0.5,-0.5) grid (8.5,8.5);
		\draw[step=2, black, cap=rect]  (-0.5,-0.5) grid (8.5,8.5);
	\end{tikzpicture}
	\caption{An element $u \in P_{2,1}(T, 3\times 3)$, the blue rectangle, can be uniquely extended to an element $v\in P_{1,1}(T, 4\times 4)$, the gray rectangle (here partly covered by the blue). }
    \label{fig:extensionP}
\end{figure}
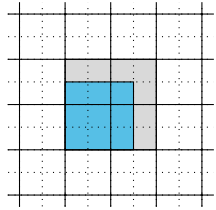

In the following lemma, we summarize the consequences of the enumeration and observation we made in Example~\ref{ex:extendsion}.

\begin{lemma}
\label{lemma:first_abc_equalties}
Let $n\geq2$. Then 
\[
\renewcommand{\arraystretch}{1.3}
	\begin{array}{rr@{\ }l}
		\mathit{1}. & a_{1,2}(2n)   &= b_{1,2}(2n) = b_{2,2}(2n-1),\\ 
		\mathit{2}. & a_{2,1}(2n)   &= c_{1,1}(2n) = c_{2,1}(2n-1), \\ 
		\mathit{3}. & a_{2,2}(2n)   &= a_{1,2}(2n+1) = c_{1,2}(2n) = b_{2,2}(2n), \\ 
		\mathit{4}. & a_{1,1}(2n+1) &= b_{1,1}(2n+1) = b_{2,1}(2n), \\ 
		\mathit{5}. & a_{2,1}(2n+1) &= a_{1,1}(2n+2) = b_{2,1}(2n+1) = c_{1,1}(2n+1),\\ 
		\mathit{6}. & a_{2,2}(2n+1) &= c_{1,2}(2n+1) = c_{2,2}(2n). 
	\end{array}
\]
\end{lemma}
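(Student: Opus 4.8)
The plan is to read every line of Lemma~\ref{lemma:first_abc_equalties} as the statement that a certain \emph{restriction map} between pattern sets is a bijection, and to prove bijectivity by a uniqueness-of-extension argument of the kind announced in Example~\ref{ex:extendsion}. First the bookkeeping: in each equality the smaller set is obtained from the larger one by deleting a single boundary row or column (twice, when three sets are listed). Deleting the top row turns an anchor $(1,j)$ into $(2,j)$ and an anchor $(2,j)$ into $(1,j)$; deleting the last column leaves the anchor unchanged; and the split between the arguments $2n$ and $2n+1$ merely records whether the deleted boundary line falls on the edge of a $2\times2$ supertile or strictly inside one. Under this dictionary each of items~1--6 becomes ``the restriction map $R$ forgetting the indicated boundary line is a bijection''. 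Since every finite pattern occurring in $T$ recurs in $T$, it occurs with room to spare on every side, so $R$ is automatically surjective; hence each equality is equivalent to the injectivity of $R$, i.e.\ to the assertion that the deleted row or column is already determined by the rest of the pattern.

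For the unique-extension statements I would argue as in the proofs of Lemma~\ref{lemma:disjointPijS} and Lemma~\ref{lemma:disjointQij}. The property in question is local, so it suffices to verify it once, for the smallest admissible size, by a finite enumeration over $P(T,4\times4)$ (available by Example~\ref{ex:patterncount}), exactly as in the enumeration of Example~\ref{ex:extendsion}; one then transports it to all larger sizes. For the transport, note that by \eqref{eq:defofPijT} an element of $P_{i,j}(T,\,\cdot\,)$ is a window of some $\mu(x)$ cut at the anchor $(i,j)$, hence assembled from the $2\times2$ $\mu$-images of the letters of a small sub-pattern $y$ of $x$; for instance $P_{1,1}(T,(2n+2)\times(2n+2))=\mu\bigl(P(T,(n+1)\times(n+1))\bigr)$, so $a_{1,1}(2n+2)=|P(T,(n+1)\times(n+1))|$ because $\mu$ is injective. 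In a large pattern every $\mu$-block not meeting the deleted boundary line is seen in full and is therefore read off directly; only the blocks straddling that line are seen partially (one row, one column, or one entry), and for these the base enumeration — applied to the bounded configuration made of a partially-seen straddling block together with the already-determined blocks next to it — shows there is a unique legal completion. Consequently two large patterns with the same restriction must coincide, since a disagreement, restricted to a bounded window around it, would reproduce a counterexample of the size already excluded by the enumeration. Carrying this out for the sizes and anchors in items~1--6, with the few smallest instances (near $n=2$) checked directly — by the remark after Example~\ref{ex:disjointPijS} the clean supertile decomposition only begins at size $3$ — proves the lemma.

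The step I expect to be the real obstacle is this transport, and in particular the bookkeeping it forces at the non-$(1,1)$ anchors. Depending on the anchor and on the parity of the pattern size, the deleted boundary line cuts the straddling $2\times2$ supertiles in genuinely different ways, so ``partial view of a straddling block plus its known neighbours determines the block'' is not one statement but a small family of finite checks, one per anchor/parity combination occurring in items~1--6; in each of them one must confirm both that enough of the neighbouring blocks is actually visible in the restricted pattern and that the adjacency constraints of $T$ leave only one completion. Verifying this handful of local statements is the substance of the enumerations underlying Lemma~\ref{lemma:first_abc_equalties}; the remaining work — matching sizes with anchors and noting surjectivity — is routine.
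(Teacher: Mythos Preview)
Your proposal is correct and follows essentially the same route as the paper: interpret each equality as the statement that a one-row or one-column restriction map between anchored pattern sets is a bijection, get surjectivity for free, and prove injectivity by verifying the smallest instance by enumeration (the paper's Example~\ref{ex:extendsion}) and then localising any hypothetical discrepancy in a large pattern down to a window of that fixed size. The paper compresses your ``transport'' step into a single sentence---given distinct $u,v$ with the same restriction, shift by suitable multiples of~$2$ to isolate a $3\times3$/$3\times4$ window witnessing the disagreement and contradict the base enumeration---and handles your anchor/parity case split through Figures~\ref{fig:extensionP1}--\ref{fig:extensionP6} rather than in words, but the content is the same.
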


\begin{proof}
Let us consider the first equality in the statement of this lemma, that is, $a_{1,2}(2n)= b_{1,2}(2n)$. Assume for contradiction that there is an $n$ such that this equality doesn't hold. Then there is a pair of distinct patterns $u, v\in P_{1,2}(T, n\times (n+1))$ such that $u[1,1,n\times n] = v[1,1,n\times n]$. Then we can find $s,t \in \mathbb{N}$ such that 
\[
	u[1+2s,2+2t, 3\times 4] \neq v[1+2s,1+2t,3\times 4]
\]
but with
\[
	u[1+2s,1+2t, 3\times 3] = v[1+2s,1+2t,3\times 3].
\]
That is, we have a contradiction to what we saw in Example~\ref{ex:extendsion}. The remaining equalities follow in the same way. See Figures~\ref{fig:extensionP1} through Figure~\ref{fig:extensionP6}.
\end{proof}

\begin{figure}[ht]
\centering
	\begin{tikzpicture}[scale = 0.275]
		\begin{scope}
			\draw[color=black, fill=gray, fill opacity=0.3] (1, 2) rectangle ++ (5,4);
			\draw[color=black, fill=cyan, fill opacity=0.6] (1, 2) rectangle ++ (4,4);
			\draw[step=1, black, dotted]    (-0.5,-0.5) grid (8.5,8.5);
			\draw[step=2, black, cap=rect]  (-0.5,-0.5) grid (8.5,8.5);
			\draw(4, -2) node{\footnotesize $a_{1,2}(2n) = b_{1,2}(2n)$};
		\end{scope}
		\begin{scope}[xshift = 15cm]
			\draw[color=black, fill=gray, fill opacity=0.3] (1, 2) rectangle ++ (4,4);
			\draw[color=black, fill=cyan, fill opacity=0.6] (1, 2) rectangle ++ (4,3);
			\draw[step=1, black, dotted]    (-0.5,-0.5) grid (8.5,8.5);
			\draw[step=2, black, cap=rect]  (-0.5,-0.5) grid (8.5,8.5);
			\draw(4, -2) node{\footnotesize $a_{1,2}(2n) = b_{2,2}(2n-1) $};
		\end{scope}
	\end{tikzpicture}
	\caption{On the left, an element $u \in P_{1,2}(T, 4\times 4)$, the blue rectangle, can be uniquely extended to an element $u'\in P_{1,2}(T, 4\times 5)$, the gray rectangle. On the right, an element $v \in P_{2,2}(T, 3\times 4)$, the blue rectangle, can be uniquely extended to an element $v'\in P_{1,2}(T, 4\times 4)$, the gray rectangle.}
    \label{fig:extensionP1}
\end{figure}
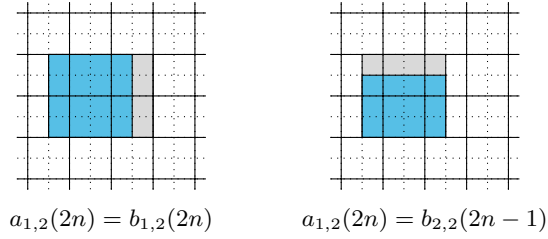

\begin{figure}[ht]
\centering
	\begin{tikzpicture}[scale = 0.275]
		\begin{scope}[xshift = 0cm]
			\draw[color=black, fill=gray, fill opacity=0.3] (2, 1) rectangle ++ (4,5);
			\draw[color=black, fill=cyan, fill opacity=0.6] (2, 1) rectangle ++ (4,4);
			\draw[step=1, black, dotted]    (-0.5,-0.5) grid (8.5,8.5);
			\draw[step=2, black, cap=rect]  (-0.5,-0.5) grid (8.5,8.5);
			\draw(4, -2) node{\footnotesize $ a_{2,1}(2n) = c_{1,1}(2n) $};
		\end{scope}
		\begin{scope}[xshift = 15cm]
			\draw[color=black, fill=gray, fill opacity=0.3] (2, 1) rectangle ++ (4,4);
			\draw[color=black, fill=cyan, fill opacity=0.6] (2, 1) rectangle ++ (3,4);
			\draw[step=1, black, dotted]    (-0.5,-0.5) grid (8.5,8.5);
			\draw[step=2, black, cap=rect]  (-0.5,-0.5) grid (8.5,8.5);
			\draw(4, -2) node{\footnotesize $a_{2,1}(2n) = c_{2,1}(2n-1)$};
		\end{scope}
	\end{tikzpicture}
	\caption{On the left, an element $u \in P_{2,1}(T, 4\times 4)$, the blue rectangle, can be uniquely extended to an element $u'\in P_{1,1}(T, 5\times 4)$, the gray rectangle. On the right, an element $v \in P_{2,1}(T, 4\times 3)$, the blue rectangle, can be uniquely extended to an element $v'\in P_{2,1}(T, 4\times 4)$, the gray rectangle. }
    \label{fig:extensionP2}
\end{figure}

\begin{figure}[ht]
\centering
	\begin{tikzpicture}[scale = 0.275]
		\begin{scope}
			\draw[color=black, fill=gray, fill opacity=0.3] (1, 1) rectangle ++ (5,5);
			\draw[color=black, fill=cyan, fill opacity=0.6] (1, 1) rectangle ++ (4,4);
			\draw[step=1, black, dotted]    (-0.5,-0.5) grid (8.5,8.5);
			\draw[step=2, black, cap=rect]  (-0.5,-0.5) grid (8.5,8.5);
			\draw(4, -2) node{\footnotesize $a_{2,2}(2n) = a_{1,2}(2n+1)$};
		\end{scope}
		\begin{scope}[xshift = 15cm]
			\draw[color=black, fill=gray, fill opacity=0.3] (1, 1) rectangle ++ (4,5);
			\draw[color=black, fill=cyan, fill opacity=0.6] (1, 1) rectangle ++ (4,4);
			\draw[step=1, black, dotted]    (-0.5,-0.5) grid (8.5,8.5);
			\draw[step=2, black, cap=rect]  (-0.5,-0.5) grid (8.5,8.5);
			\draw(4, -2) node{\footnotesize $a_{2,2}(2n) = a_{1,2}(2n)$};
		\end{scope}
		\begin{scope}[xshift = 30cm]
			\draw[color=black, fill=gray, fill opacity=0.3] (1, 1) rectangle ++ (5,4);
			\draw[color=black, fill=cyan, fill opacity=0.6] (1, 1) rectangle ++ (4,4);
			\draw[step=1, black, dotted]    (-0.5,-0.5) grid (8.5,8.5);
			\draw[step=2, black, cap=rect]  (-0.5,-0.5) grid (8.5,8.5);
			\draw(4, -2) node{\footnotesize $a_{2,2}(2n) = b_{2,2}(2n)$};
		\end{scope}
	\end{tikzpicture}
	\caption{On the left, an element $u \in P_{2,2}(T, 4\times 4)$, the blue rectangle, can be uniquely extended to an element $u'\in P_{1,2}(T, 5\times 5)$, the gray rectangle. In the center, an element $w \in P_{2,2}(T, 4\times 4)$, the blue rectangle, can be uniquely extended to an element $w'\in P_{1,2}(T, 5\times 4)$, the gray rectangle. On the right, an element $v \in P_{2,2}(T, 4\times 4)$, the blue rectangle, can be uniquely extended to an element $v'\in P_{2,2}(T, 4\times 5)$, the gray rectangle. }
    \label{fig:extensionP3}
\end{figure}

\begin{figure}[ht]
\centering
	\begin{tikzpicture}[scale = 0.275]
		\begin{scope}
			\draw[color=black, fill=gray, fill opacity=0.3] (2, 1) rectangle ++ (6,5);
			\draw[color=black, fill=cyan, fill opacity=0.6] (2, 1) rectangle ++ (5,5);
			\draw[step=1, black, dotted]    (-0.5,-0.5) grid (8.5,8.5);
			\draw[step=2, black, cap=rect]  (-0.5,-0.5) grid (8.5,8.5);
			\draw(4, -2) node{\footnotesize $a_{1,1}(2n+1) = b_{1,1}(2n+1)$};
		\end{scope}
		\begin{scope}[xshift = 15cm]
			\draw[color=black, fill=gray, fill opacity=0.3] (2, 1) rectangle ++ (5,5);
			\draw[color=black, fill=cyan, fill opacity=0.6] (2, 1) rectangle ++ (5,4);
			\draw[step=1, black, dotted]    (-0.5,-0.5) grid (8.5,8.5);
			\draw[step=2, black, cap=rect]  (-0.5,-0.5) grid (8.5,8.5);
			\draw(4, -2) node{\footnotesize $a_{1,1}(2n+1) = b_{2,1}(2n)$};
		\end{scope}
	\end{tikzpicture}
	\caption{On the left, an element $u \in P_{1,1}(T, 5\times 5)$, the blue rectangle, can be uniquely extended to an element $u'\in P_{1,1}(T, 5\times 6)$, the gray rectangle. On the right, an element $v \in P_{2,1}(T, 4\times 5)$, the blue rectangle, can be uniquely extended to an element $v'\in P_{1,1}(T, 5\times 5)$, the gray rectangle. }
    \label{fig:extensionP4}
\end{figure}

\begin{figure}[ht]
\centering
	\begin{tikzpicture}[scale = 0.275]
		\begin{scope}[xshift = 0cm]
			\draw[color=black, fill=gray, fill opacity=0.3] (2, 2) rectangle ++ (6,6);
			\draw[color=black, fill=cyan, fill opacity=0.6] (2, 2) rectangle ++ (5,5);
			\draw[step=1, black, dotted]    (-0.5,-0.5) grid (8.5,8.5);
			\draw[step=2, black, cap=rect]  (-0.5,-0.5) grid (8.5,8.5);
			\draw(4, -2) node{\footnotesize $a_{2,1}(2n+1) = a_{1,1}(2n+2)$};
		\end{scope}
		\begin{scope}[xshift = 15cm]
			\draw[color=black, fill=gray, fill opacity=0.3] (2, 2) rectangle ++ (5,6);
			\draw[color=black, fill=cyan, fill opacity=0.6] (2, 2) rectangle ++ (5,5);
			\draw[step=1, black, dotted]    (-0.5,-0.5) grid (8.5,8.5);
			\draw[step=2, black, cap=rect]  (-0.5,-0.5) grid (8.5,8.5);
			\draw(4, -2) node{\footnotesize $a_{2,1}(2n+1) = c_{1,1}(2n+1)$};
		\end{scope}
		\begin{scope}[xshift = 30cm]
			\draw[color=black, fill=gray, fill opacity=0.3] (2, 2) rectangle ++ (6,5);
			\draw[color=black, fill=cyan, fill opacity=0.6] (2, 2) rectangle ++ (5,5);
			\draw[step=1, black, dotted]    (-0.5,-0.5) grid (8.5,8.5);
			\draw[step=2, black, cap=rect]  (-0.5,-0.5) grid (8.5,8.5);
			\draw(4, -2) node{\footnotesize $a_{2,1}(2n+1) = b_{2,1}(2n+1)$};
		\end{scope}
	\end{tikzpicture}
	\caption{On the left, an element $u \in P_{2,1}(T, 5\times 5)$, the blue rectangle, can be uniquely extended to an element $u'\in P_{1,1}(T, 6\times 6)$, the gray rectangle. In the center, an element $w \in P_{2,1}(T, 5\times 4)$, the blue rectangle, can be uniquely extended to an element $w'\in P_{1,1}(T, 6\times 5)$, the gray rectangle. On the right, an element $v \in P_{2,1}(T, 5\times 5)$, the blue rectangle, can be uniquely extended to an element $v'\in P_{2,1}(T, 5\times 6)$, the gray rectangle. }
    \label{fig:extensionP5}
\end{figure}

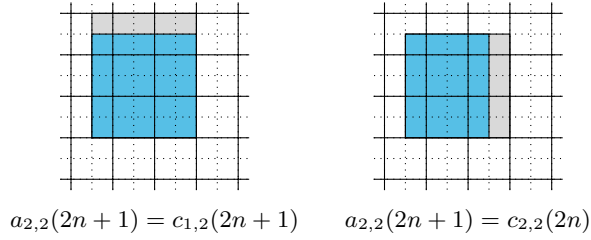
\begin{figure}[ht]
\centering
	\begin{tikzpicture}[scale = 0.275]
		\begin{scope}
			\draw[color=black, fill=gray, fill opacity=0.3] (1, 2) rectangle ++ (5,6);
			\draw[color=black, fill=cyan, fill opacity=0.6] (1, 2) rectangle ++ (5,5);
			\draw[step=1, black, dotted]    (-0.5,-0.5) grid (8.5,8.5);
			\draw[step=2, black, cap=rect]  (-0.5,-0.5) grid (8.5,8.5);
			\draw(4, -2) node{\footnotesize $a_{2,2}(2n+1) = c_{1,2}(2n+1)$};
		\end{scope}
		\begin{scope}[xshift = 15cm]
			\draw[color=black, fill=gray, fill opacity=0.3] (1, 2) rectangle ++ (5,5);
			\draw[color=black, fill=cyan, fill opacity=0.6] (1, 2) rectangle ++ (4,5);
			\draw[step=1, black, dotted]    (-0.5,-0.5) grid (8.5,8.5);
			\draw[step=2, black, cap=rect]  (-0.5,-0.5) grid (8.5,8.5);
			\draw(4, -2) node{\footnotesize $a_{2,2}(2n+1) = c_{2,2}(2n)$};
		\end{scope}
	\end{tikzpicture}
	\caption{On the left, an element $u \in P_{2,2}(T, 5\times 5)$, the blue rectangle, can be uniquely extended to an element $u'\in P_{1,2}(T, 6\times 5)$, the gray rectangle. On the right, an element $v \in P_{2,2}(T, 5\times 4)$, the blue rectangle, can be uniquely extended to an element $v'\in P_{2,2}(T, 5\times 5)$, the gray rectangle. }
    \label{fig:extensionP6}
\end{figure}

From Lemma~\ref{lemma:first_abc_equalties} we may derive the following corollary. 

\begin{corollary}
\label{cor:abcEqualities}
Let $n\geq 1$. Then 
\[
\renewcommand{\arraystretch}{1.3}
	\begin{array}[b]{rr@{\ }l}
		\mathit{1}.  & a_{1,2}(4n)   &= b_{1,2}(4n), \\
		\mathit{2}.  & a_{2,1}(4n)   &= c_{1,1}(4n), \\
		\mathit{3}.  & a_{2,2}(4n)   &= a_{1,2}(4n+1) = c_{1,2}(4n) = b_{2,2}(4n), \\
		\mathit{4}.  & a_{1,1}(4n+1) &= b_{1,1}(4n+1) = b_{2,1}(4n), \\
		\mathit{5}.  & a_{2,1}(4n+1) &= a_{1,1}(4n+2) = c_{1,1}(4n+1) = b_{2,1}(4n+1), \\
		\mathit{6}.  & a_{2,2}(4n+1) &= c_{1,2}(4n+1) = c_{2,2}(4n), \\
		\mathit{7}.  & a_{1,2}(4n+2) &= b_{1,2}(4n+2) = b_{2,2}(4n+1), \\
		\mathit{8}.  & a_{2,1}(4n+2) &= c_{1,1}(4n+2) = c_{2,1}(4n+1), \\
		\mathit{9}.  & a_{2,2}(4n+2) &= a_{1,2}(4n+3) = c_{1,2}(4n+2) = b_{2,2}(4n+2), \\
		\mathit{10}. & a_{1,1}(4n+3) &= b_{1,1}(4n+3) = b_{2,1}(4n+2), \\
		\mathit{11}. & a_{2,1}(4n+3) &= c_{1,1}(4n+3) = b_{2,1}(4n+3), \\
		\mathit{12}. & a_{2,2}(4n+3) &= c_{1,2}(4n+3) = c_{2,2}(4n+2).
	\end{array}\hfill\square
\]
\end{corollary}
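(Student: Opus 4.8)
The plan is to read Corollary~\ref{cor:abcEqualities} off Lemma~\ref{lemma:first_abc_equalties} by a single parity case split on the side length. Every side length occurring in the corollary lies in one of the residue classes $4n$, $4n+1$, $4n+2$, $4n+3$; I would write such a side length as $2m$ or $2m+1$ with $m$ of a prescribed parity and then quote the matching identity from the lemma. Concretely, items 1--6 of the corollary are precisely items 1--6 of Lemma~\ref{lemma:first_abc_equalties} with its variable $n$ replaced by $2n$, and items 7--12 are items 1--6 of the lemma with its variable replaced by $2n+1$, taken in the same order. For instance, item 3 of the lemma at $n\mapsto 2n$ reads $a_{2,2}(4n)=a_{1,2}(4n+1)=c_{1,2}(4n)=b_{2,2}(4n)$, which is item 3 of the corollary, while item 6 of the lemma at $n\mapsto 2n+1$ reads $a_{2,2}(4n+3)=c_{1,2}(4n+3)=c_{2,2}(4n+2)$, which is item 12.

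In three of the corollary's items (namely 1, 2 and 11) the chain produced by the lemma carries one extra link — for example the lemma additionally gives $a_{1,2}(4n)=b_{2,2}(4n-1)$ in item 1, a term $c_{2,1}(4n-1)$ in item 2, and a middle term $a_{1,1}(4n+4)$ in item 11 — so one simply retains those equalities that are actually listed; I would flag this explicitly so that no hidden step is suspected. The one point that needs genuine checking is the index range: Lemma~\ref{lemma:first_abc_equalties} is stated for its variable at least $2$, so after replacing that variable by $2n$ with $n\geq1$ (giving $2n\geq2$) or by $2n+1$ with $n\geq1$ (giving $2n+1\geq3$) every invocation stays within the lemma's hypothesis, and the corollary holds for all $n\geq1$. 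The value $n=0$ is excluded precisely because $2\cdot 0+1=1<2$.

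I expect no real mathematical obstacle here: all the content is in Lemma~\ref{lemma:first_abc_equalties}, and the corollary only repackages it, refining the modulus from $2$ to $4$ into the form that will be convenient for the recursion set up in the following section. The only place where care is needed is the bookkeeping — pairing each residue class with the correct lemma identity and the correct parity of the inner index — since a swapped parity or an off-by-one would produce a false statement. Once the dictionary above is in place, verifying each of the twelve identities is immediate.
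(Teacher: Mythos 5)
Your proposal is correct and is exactly the paper's intended derivation: the corollary is stated with its proof omitted precisely because it is Lemma~\ref{lemma:first_abc_equalties} specialized at even and odd values of its parameter ($n\mapsto 2n$ and $n\mapsto 2n+1$), which is what you do. Your bookkeeping checks out, including the dropped extra links ($b_{2,2}(4n-1)$, $c_{2,1}(4n-1)$, $a_{1,1}(4n+4)$ in items 1, 2, 11) and the verification that $2n\geq 2$ and $2n+1\geq 3$ keep every invocation within the lemma's hypothesis $n\geq 2$.
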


The implication of Corollary~\ref{cor:abcEqualities}, is that we can reduce the number of cases of the $a,b$ and $c$-terms for which we have to find recursion relations. We will consider these recursion relations in section~\ref{sec:recursions}.

Similar to the extension we of $P$-sets we have discussed above, we can find an analogues property for the $Q$-sets, see \eqref{eq:defofQijT}. 

\begin{example}
\label{ex:extendsionQ}
An enumeration shows that we have 
\[	|Q_{4,4}(T,5\times6)| = |Q_{1,2}(T,11\times11)|.
\]
That is, for any element $u \in Q_{4,4}(T,5\times6)$ there is a unique pattern $v\in Q_{1,2}(T,11\times11)$ with $u = v[4,3,5\times6]$. See Figure~\ref{fig:extensionQ} for this extension of a pattern.
\hfill$\diamond$   
\end{example}

\begin{lemma}
\label{lemma:Qextensions}
Let $n\geq1$. Then 

\vspace{\abovedisplayskip}
\noindent
\begin{tblr}{X[c]c@{}}
$|Q_{4,4}(T,(4n+1)\times(4n+2))| = |Q_{1,2}(T,(4n+7)\times(4n+7))|.$
&
$\square$
\end{tblr}

\end{lemma}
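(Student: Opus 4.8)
The plan is to build an explicit bijection between the two sets and then conclude by counting. Concretely, I would study the restriction map
\[
	r : Q_{1,2}(T,(4n+7)\times(4n+7)) \longrightarrow Q_{4,4}(T,(4n+1)\times(4n+2)),
	\qquad
	v \longmapsto v[4,3,(4n+1)\times(4n+2)],
\]
and show it is a bijection. The guiding observation is that a $(4n+1)\times(4n+2)$ window placed at position $[4,3]$ inside a $(4n+7)\times(4n+7)$ pattern leaves a frame of exactly $3$ rows above, $3$ rows below, $2$ columns to the left and $3$ columns to the right, independently of $n$; for $n=1$ this frame is precisely the $5\times6$-inside-$11\times11$ configuration of Example~\ref{ex:extendsionQ}. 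Throughout I would read $Q_{i,j}(T,m\times n)$ as the set of $m\times n$ patterns occurring in $T=\mu^2(T)$ whose upper-left cell lies at a position $\equiv(i,j)$ in the $4\times4$ block grid of $\mu^2$, and use freely that every finite subpattern of $T$ is contained in subpatterns of $T$ of any prescribed larger size.

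First I would check that $r$ is well-defined and surjective, which is pure positional bookkeeping of the kind already used for Lemma~\ref{lemma:disjointQij}. If $v=\mu^2(x)[1,2,(4n+7)\times(4n+7)]$, then $r(v)=\mu^2(x)[4,4,(4n+1)\times(4n+2)]$ because the offset $(4,3)$ shifts block-position $(1,2)$ to $(4,4)$; this depends only on a small corner of $x$, so replacing $x$ by a witness of size $(4n+1)\times(4n+2)$ puts $r(v)$ in $Q_{4,4}(T,(4n+1)\times(4n+2))$. Conversely, given $u=\mu^2(y)[4,4,(4n+1)\times(4n+2)]$, the block $\mu^2(y)$ has size $(16n+4)\times(16n+8)$, which for $n\ge1$ comfortably contains the window $\mu^2(y)[1,2,(4n+7)\times(4n+7)]$; this window lies in $Q_{1,2}(T,(4n+7)\times(4n+7))$ and maps to $u$ under $r$.

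The substance of the argument is injectivity, and here I would mimic the \emph{enumerate a base case, then tile} strategy of the earlier lemmas. Suppose $v_1\neq v_2$ with $r(v_1)=r(v_2)=:u$. For $s,t\in\{0,\dots,n-1\}$ consider the $5\times6$ windows $w_{s,t}:=u[1+4s,1+4t,5\times6]$; they fit inside $u$, and since their offsets relative to $u$ are multiples of $4$ they again occur at block-position $(4,4)$, so $w_{s,t}\in Q_{4,4}(T,5\times6)$. Each $w_{s,t}$ is a common subpattern of $v_1$ and $v_2$ (occurring in $v_i$ at $v_i$-coordinate $(4+4s,3+4t)$), and the $11\times11$ window $v_i[1+4s,1+4t,11\times11]$ belongs to $Q_{1,2}(T,11\times11)$ and contains $w_{s,t}$ at its $[4,3]$ sub-position. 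By the uniqueness asserted in Example~\ref{ex:extendsionQ}, both $v_1[1+4s,1+4t,11\times11]$ and $v_2[1+4s,1+4t,11\times11]$ equal the unique extension of $w_{s,t}$, hence agree, for all $s,t\in\{0,\dots,n-1\}$. Finally, the $11\times11$ windows whose upper-left corners run over the step-$4$ grid $\{1,5,\dots,4n-3\}\times\{1,5,\dots,4n-3\}$ overlap (because $4<11$) and together cover every cell of the $(4n+7)\times(4n+7)$ patterns $v_i$, so $v_1=v_2$, contradicting $v_1\neq v_2$.

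I expect the only real friction to be the positional bookkeeping -- confirming that restricting to a window at a multiple-of-$4$ offset preserves the $\mu^2$-block position, that the offset $(4,3)$ links position $(1,2)$ to position $(4,4)$, and that the required witnesses of the prescribed sizes exist -- but all of this is routine and exactly parallels Lemma~\ref{lemma:first_abc_equalties} and Lemma~\ref{lemma:disjointQij}; the genuine inputs are the enumerated base case of Example~\ref{ex:extendsionQ} and the numeric slack $4<11$. I would also note that the argument in fact proves $r$ is a bijection, so the \emph{unique extension} formulation of the examples survives for all $n\ge1$, not merely the stated equality of cardinalities.
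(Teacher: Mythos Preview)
Your proposal is correct and matches the paper's (implicit) approach: the paper gives no proof for this lemma, marking it with a bare $\square$ after Example~\ref{ex:extendsionQ}, the understanding being that it follows from the enumerated $n=1$ case by the same reduce-to-a-small-window argument used for Lemma~\ref{lemma:first_abc_equalties}. You have carried that out explicitly, and your positional bookkeeping (the offset $(4,3)$ linking $(1,2)$ to $(4,4)$, the step-$4$ grid of $11\times11$ windows covering the full $(4n+7)\times(4n+7)$ pattern, the fit of the $5\times6$ windows inside $u$) all checks out.
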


\begin{figure}[ht]
\centering
	\begin{tikzpicture}[scale = 0.3]
		\draw[color=black, fill=gray, fill opacity=0.3] (1, 1) rectangle ++ (11, 11);
		\draw[color=black, fill=cyan, fill opacity=0.6] (3, 4) rectangle ++ (6, 5); 
		\draw[step=1, black, dotted]   (-0.5,-0.5) grid (12.5, 12.5);
		\draw[step=2, black, dashed]   (-0.5,-0.5) grid (12.5, 12.5);
		\draw[step=4, black, cap=rect] (-0.5,-0.5) grid (12.5, 12.5);
	\end{tikzpicture}
	\caption{An element $u \in Q_{4,4}(T, 5\times 6)$, the blue rectangle, can be uniquely extended to an element $v \in Q_{1,2}(T, 11\times 11)$, the gray rectangle.}
     \label{fig:extensionQ}
\end{figure}
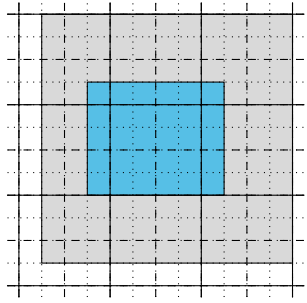

\section{Recursions}
\label{sec:recursions}

In the previous section we saw how to modify the patterns in a set without changing the cardinality of the corresponding set. In this section we apply this property to derive recursions for the quantities $a, b$ and $c$, as defined in \eqref{eq:defabc}. The equalties in Corollary~\ref{cor:abcEqualities} helps us to reduce the number of cases we have to look at here.  

Assume that $n\geq 1$. Then we have the following list of deductions, with the help of Lemma~\ref{lemma:disjointQij} and Lemma~\ref{lemma:Qextensions}. 

\begin{equation}
\label{eq:reca110}%
\text{\footnotesize$
\renewcommand{\arraystretch}{1.3}

\caption{Initial terms for $a$, $b$, $c$, and $A$.}
\label{table:initialvalues}
\end{table}

\section{Proof of Main Theorem}
\label{sec:proofofmainthm}

This section is devoted to give a simplified recursion for the number of patterns of given size, based on the system in \eqref{eq:recsysA} \eqref{eq:recsysB}, and \eqref{eq:recsysC}, and then use this simplification to prove Theorem~\ref{thm:main}.

We start by expanding the list of equalities given in Lemma~\ref{lemma:first_abc_equalties}.

\begin{lemma}
\label{lemma:second_abc_equalties}
Let $n\geq1$. Then 
\begin{equation}
\label{eq:second_abc_equalties}
\renewcommand{\arraystretch}{1.3}
\begin{array}{r@{\ }l}
	  a_{1,2}(2n) &= a_{2,1}(2n) = a_{2,2}(2n), \\
	a_{1,1}(2n+1) &= b_{1,2}(2n+1).
\end{array}
\end{equation}
\end{lemma}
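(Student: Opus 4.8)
The plan is to establish the two displayed equalities simultaneously by a single strong induction on the (common) argument. For an integer $N\ge 2$ let $P(N)$ be the statement that $a_{1,2}(N)=a_{2,1}(N)=a_{2,2}(N)$ when $N$ is even, and $a_{1,1}(N)=b_{1,2}(N)$ when $N$ is odd; the lemma is then exactly ``$P(N)$ holds for all even $N\ge 2$ and all odd $N\ge 3$''. The induction needs two base cases, $P(2)$ and $P(3)$, which I would simply read off Table~\ref{table:initialvalues}: $a_{1,2}(2)=a_{2,1}(2)=a_{2,2}(2)=20$ and $a_{1,1}(3)=b_{1,2}(3)=48$. These must be base cases rather than consequences of the recursions because \eqref{eq:recsysA}--\eqref{eq:recsysC} are only valid for $n\ge 1$, so they never produce the values at arguments $2$ and $3$.

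For the inductive step, fix $N\ge 4$, assume $P(M)$ for all $M<N$ in the stated ranges, and split according to $N\bmod 4$, using only \eqref{eq:recsysA} and \eqref{eq:recsysB} (no $c$-terms are needed, and Corollary~\ref{cor:abcEqualities} is not used). If $N=4n$ (so $n\ge 1$) then the lines for $a_{1,2}(4n)$ and $a_{2,2}(4n)$ in \eqref{eq:recsysA} are literally identical, so $a_{1,2}(N)=a_{2,2}(N)$ with no hypothesis, while subtracting the $a_{1,2}(4n)$ line from the $a_{2,1}(4n)$ line leaves $2\bigl(a_{2,1}(2n)-a_{1,2}(2n)\bigr)$, which is $0$ by the even case of $P(2n)$. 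If $N=4n+2$ (so $n\ge 1$), again the $a_{1,2}$ and $a_{2,2}$ lines coincide, and the difference of the $a_{2,1}(4n+2)$ and $a_{1,2}(4n+2)$ lines, after cancelling the common summands $a_{1,2}(2n)$ and $a_{2,2}(2n+1)$, becomes the identity $a_{1,1}(2n+1)+a_{2,1}(2n+2)=b_{1,2}(2n+1)+a_{1,2}(2n+2)$, which holds by $P(2n+1)$ and $P(2n+2)$. If $N=4n+1$ (so $n\ge 1$), comparing $a_{1,1}(4n+1)$ from \eqref{eq:recsysA} with $b_{1,2}(4n+1)$ from \eqref{eq:recsysB} leaves $2\bigl(a_{1,1}(2n+1)-b_{1,2}(2n+1)\bigr)$, which is $P(2n+1)$. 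If $N=4n+3$ (so $n\ge 1$), the same comparison for $a_{1,1}(4n+3)$ and $b_{1,2}(4n+3)$ reduces, after cancelling one copy each of $b_{1,2}(2n+1)$ and $a_{1,2}(2n+2)$, to $a_{1,1}(2n+1)+a_{2,1}(2n+2)=b_{1,2}(2n+1)+a_{1,2}(2n+2)$, which again follows from $P(2n+1)$ and $P(2n+2)$. In every case the auxiliary arguments $2n$, $2n+1$, $2n+2$ are strictly smaller than $N$ and lie in the range of the inductive hypothesis (for $n\ge 1$ one has $2n+1\ge 3$ and $2n+2\ge 4$), so the induction closes.

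The only genuinely delicate point is the bookkeeping in the inductive step: for each residue class one has to verify that the recursion quoted from \eqref{eq:recsysA} or \eqref{eq:recsysB} is the right one and is valid (which is what forces the $n\ge 1$ restriction and hence the two explicit base cases), that after deleting the summands common to the two recursions being compared the remaining identity is precisely an instance of $P$ at a smaller argument, and that all arguments so produced are $<N$ so the induction is well-founded. Everything beyond this is routine algebra, and the numerical base cases are checked directly against Table~\ref{table:initialvalues}.
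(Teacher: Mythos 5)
Your proof is correct and follows essentially the same route as the paper: split by the residue of the argument modulo $4$, feed in the recursions \eqref{eq:recsysA}--\eqref{eq:recsysB}, and close an induction whose base cases come from Table~\ref{table:initialvalues}. The only cosmetic differences are that you organise it as a single strong induction on the argument $N$ (needing just $P(2)$ and $P(3)$ as base cases) and avoid invoking Lemma~\ref{lemma:first_abc_equalties}, whereas the paper phrases it as an induction on $k$ over the four residue classes with the same algebraic cancellations.
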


\begin{proof}
The initial cases, $n = 1,2,3,4$ are directly seen from Table~\ref{table:initialvalues}. To prove that the equalities in \eqref{eq:second_abc_equalties} hold,  we consider two cases depending on the parity of $n$, that is, 
\begin{equation}
\label{eq:assumption_second_abc_equalties}
\renewcommand{\arraystretch}{1.3}
\begin{array}{r@{\ }c@{\ }l@{\ }c@{\ }l}
	a_{1,2}(4k)   &=& a_{2,1}(4k)   &=& a_{2,2}(4k), \\
	a_{1,2}(4k+2) &=& a_{2,1}(4k+2) &=& a_{2,2}(4k+2), \\
	a_{1,1}(4k+1) &=& b_{1,2}(4k+1), \\
	a_{1,1}(4k+3) &=& b_{1,2}(4k+3), \\
\end{array}
\end{equation}
for $k\geq1$. Here we give a proof by induction on $k$. The basis cases are, as just mentioned, clear. Assume for induction that the equalities in \eqref{eq:assumption_second_abc_equalties} hold for $1\leq k < p$. Then, for the induction step, Lemma~\ref{lemma:first_abc_equalties}, the recursion from \eqref{eq:recsysA} \eqref{eq:recsysB}, and \eqref{eq:recsysC}, and the induction assumption imply 
\begin{itemize}
\item
$\begin{aligned}[t]	
	a_{1,2}(4p) - a_{2,1}(4p) = 2a_{1,2}(2p) - 2a_{2,1}(2p) = 0,
\end{aligned}$
\item
$\begin{aligned}[t]	
	a_{1,2}(4p) - a_{2,2}(4p) = 4a_{1,2}(2p) - 4a_{1,2}(2p) = 0,
\end{aligned}$
\item
$\begin{aligned}[t]	
	a_{1,1}(4p+1) - b_{1,2}(4p+1) = 2a_{1,1}(2p+1) - 2b_{1,2}(2p+1) = 0,  
\end{aligned}$
\item
$\begin{aligned}[t]	
	a_{1,2}(4p+2) - a_{2,1}(4p+2) = &
 \phantom{+} \  b_{1,2}(2p+1) + a_{1,2}(2p+2) \\
&-a_{1,1}(2p+1) - a_{2,1}(2p+2)   = 0,
\end{aligned}$
\item
$\begin{aligned}[t]	
	a_{1,2}(4p+2) - a_{2,2}(4p+2) =  
  a_{1,1}(2p) - a_{1,2}(2p)  = 0,
\end{aligned}$
\item
$\begin{aligned}[t]	
	a_{1,1}(4p+3) - b_{1,2}(4p+3) = a_{1,1}(2p+1) -b_{1,2}(2p+1) = 0,
\end{aligned}$
\end{itemize}
which completes the proof. 
\end{proof}

\begin{lemma}
\label{lemma:plus4}
Let $n\geq 1$. Then 
\begin{equation}
\label{eq:plus4}
\begin{split}
	a_{1,1}(2n)   + 4 &= a_{1,2}(2n), \\
	a_{2,1}(2n+1) + 4 &= b_{2,2}(2n+1).
\end{split}
\end{equation}
\end{lemma}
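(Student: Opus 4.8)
The plan is to prove the two identities together by strong induction, in the same spirit as the proof of Lemma~\ref{lemma:second_abc_equalties}. First I would read off the two base cases directly from Table~\ref{table:initialvalues}, namely $a_{1,1}(2)+4 = 16+4 = 20 = a_{1,2}(2)$ and $a_{2,1}(3)+4 = 76+4 = 80 = b_{2,2}(3)$; these are exactly the arguments ($2n=2$ and $2n+1=3$) that are not reachable by the recursions \eqref{eq:recsysA}--\eqref{eq:recsysC}, which are valid only for $n\geq1$. For all larger arguments I would argue by induction, writing $f(m) := a_{1,2}(2m)-a_{1,1}(2m)$ and showing $f(m)=4$ for every $m\geq1$, and then deducing the second identity from this with no further induction.

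For the first identity I would split on the parity of $n$ in $a_{1,1}(2n)+4=a_{1,2}(2n)$. If $n=2k$ with $k\geq1$, then by \eqref{eq:recsysA} we have $a_{1,2}(4k)=4a_{1,2}(2k)$ and $a_{1,1}(4k)=a_{1,1}(2k)+a_{1,2}(2k)+a_{2,1}(2k)+a_{2,2}(2k)$; collapsing $a_{2,1}(2k)=a_{2,2}(2k)=a_{1,2}(2k)$ via Lemma~\ref{lemma:second_abc_equalties} gives $a_{1,2}(2n)-a_{1,1}(2n)=a_{1,2}(2k)-a_{1,1}(2k)$, which is $4$ by the induction hypothesis since $k<n$. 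If $n=2k+1$ with $k\geq1$, then \eqref{eq:recsysA} gives $a_{1,1}(4k+2)=a_{2,1}(4k+1)=a_{1,1}(2k+1)+a_{1,2}(2k)+a_{1,1}(2k+2)+a_{2,2}(2k+1)$ and $a_{1,2}(4k+2)=a_{1,2}(2k)+b_{1,2}(2k+1)+a_{2,2}(2k+1)+a_{1,2}(2k+2)$; subtracting, the $a_{1,2}(2k)$ and $a_{2,2}(2k+1)$ terms cancel, and using $b_{1,2}(2k+1)=a_{1,1}(2k+1)$ from Lemma~\ref{lemma:second_abc_equalties} leaves $a_{1,2}(2n)-a_{1,1}(2n)=a_{1,2}(2k+2)-a_{1,1}(2k+2)$, which is $4$ by the induction hypothesis since $k+1<2k+1=n$ for $k\geq1$.

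For the second identity, $a_{2,1}(2n+1)+4=b_{2,2}(2n+1)$, I would again split on parity, but now each case reduces directly to the (already established) first identity. If $2n+1=4k+1$, i.e. $n=2k$ with $k\geq1$, then \eqref{eq:recsysB} gives $b_{2,2}(4k+1)=a_{1,2}(4k+2)$, and expanding both $a_{1,2}(4k+2)$ and $a_{2,1}(4k+1)$ by \eqref{eq:recsysA} and cancelling as above (using $b_{1,2}(2k+1)=a_{1,1}(2k+1)$) yields the difference $a_{1,2}(2k+2)-a_{1,1}(2k+2)=4$. If $2n+1=4k+3$, i.e. $n=2k+1$ with $k\geq1$, then \eqref{eq:recsysB} gives $b_{2,2}(4k+3)=4a_{1,2}(2k+2)$ while \eqref{eq:recsysA} gives $a_{2,1}(4k+3)=a_{1,1}(2k+2)+a_{1,2}(2k+2)+a_{2,1}(2k+2)+a_{1,2}(2k+2)$; collapsing $a_{2,1}(2k+2)=a_{1,2}(2k+2)$ gives the difference $a_{1,2}(2k+2)-a_{1,1}(2k+2)=4$.

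All of the algebra is routine term-cancellation, so I do not expect a genuine conceptual obstacle; the one thing that needs care is bookkeeping — verifying that every invocation of \eqref{eq:recsysA}--\eqref{eq:recsysC} is at an argument at least $4$ (so the recursions apply), that every appeal to the induction hypothesis lands at a strictly smaller index, and that the only residual hand cases are precisely $2n=2$ and $2n+1=3$, which match the table. In short, the main "difficulty" is merely organizing the four parity cases and the dependence on Lemma~\ref{lemma:second_abc_equalties} cleanly.
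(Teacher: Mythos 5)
Your proposal is correct, and it uses the same toolkit as the paper---parity splitting into the four residues mod $4$, the recursions \eqref{eq:recsysA}--\eqref{eq:recsysB}, Lemma~\ref{lemma:first_abc_equalties}, Lemma~\ref{lemma:second_abc_equalties}, and the base values in Table~\ref{table:initialvalues}---but its logical structure differs from the paper's in one genuine way. The paper runs a single \emph{coupled} induction on all four cases of \eqref{eq:proof_plus4_cases}: in its step for $a_{1,2}(4p+2)-a_{1,1}(4p+2)$ it rewrites $a_{1,2}(2p+2)=b_{2,2}(2p+1)$ and $a_{1,1}(2p+2)=a_{2,1}(2p+1)$ via Lemma~\ref{lemma:first_abc_equalties} and then invokes the induction hypothesis for the \emph{second} identity, while its steps for $b_{2,2}(4p+1)-a_{2,1}(4p+1)$ and $b_{2,2}(4p+3)-a_{2,1}(4p+3)$ in turn lean on the first identity (or the second at a smaller index), so the two families feed each other inside one induction. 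You decouple them: in the $4k+2$ case you cancel $b_{1,2}(2k+1)$ against $a_{1,1}(2k+1)$ (Lemma~\ref{lemma:second_abc_equalties}) and land on $a_{1,2}(2k+2)-a_{1,1}(2k+2)$ with $k+1<2k+1$, so the first identity closes under its own strong induction, and the second identity then follows pointwise, with no further induction, from $b_{2,2}(4k+1)=a_{1,2}(4k+2)$ and $b_{2,2}(4k+3)=4\,a_{1,2}(2k+2)$ in \eqref{eq:recsysB} together with the already-proved first identity. What this buys is a cleaner well-foundedness check (no mutual dependence between the two families, and the only hand-checked cases are indeed $2n=2$ and $2n+1=3$), at no real cost; your index bookkeeping (all recursion calls at parameter $k\geq1$, all inductive appeals at index $k$ or $k+1$, strictly below $n$) is sound.
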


\begin{proof}
We split the equalities in \eqref{eq:plus4} into the cases depending on the parity of $n$. That is, we look at 
\begin{equation}
\label{eq:proof_plus4_cases}
\renewcommand{\arraystretch}{1.3}
\begin{array}{r@{\ }c@{\ }l@{\ }}
	a_{1,1}(4k)   + 4 &=& a_{1,2}(4k), \\
	a_{1,1}(4k+2) + 4 &=& a_{1,2}(4k+2), \\
	a_{2,1}(4k+1) + 4 &=& b_{2,2}(4k+1), \\
	a_{2,1}(4k+3) + 4 &=& b_{2,2}(4k+3), \\
\end{array}
\end{equation}
for $k\geq1$. The case $n=1$ and $k=1,2$ are direct from Table~\ref{table:initialvalues}.
We prove the equalities in \eqref{eq:proof_plus4_cases} by induction on $k$. The basis cases are, as just mentioned clear. Assume for induction that the equalities in \eqref{eq:proof_plus4_cases} hold for $1\leq k < p$. Then, for the induction step, Lemma~\ref{lemma:first_abc_equalties}, Lemma~\ref{lemma:second_abc_equalties}, and the induction assumption, give
\begin{itemize}
\item
$\begin{aligned}[t]	
	a_{1,2}(4p) - a_{1,1}(4p)
	&= a_{1,2}(2n) + a_{2,2}(2n) - a_{1,1}(2n) -	a_{2,1}(2n) \\
		&= a_{1,2}(2p) - a_{1,1}(2p) \\
		&= 4,
\end{aligned}$

\item
$\begin{aligned}[t]	
	a_{1,2}(4p+2) - a_{1,1}(4p+2)
		&= b_{1,2}(2p+1) + b_{2,2}(2p+1)  \\
		&\phantom{=} - a_{1,1}(2p+1) - a_{2,1}(2p+1) \\
		&= b_{2,2}(2p+1) - a_{2,1}(2p+1) \\
		&= 4,
\end{aligned}$

\item
$\begin{aligned}[t]	
	b_{2,2}(4p+1) - a_{2,1}(4p+1) &= a_{1,2}(4p+2) - a_{1,1}(4p+2) = 4,
\end{aligned}$

\item
$\begin{aligned}[t]	
b_{2,2}(4p+3) - a_{2,1}(4p+3) 
	&= a_{1,2}(2p+2) + a_{2,2}(2p+2)  \\
	&\phantom{=} - a_{2,1}(2p+1) - a_{2,1}(2p+2) \\
	&= a_{1,2}(2p+2) - a_{2,1}(2p+1) \\
	&= b_{2,2}(2p+1) - a_{2,1}(2p+1) \\
	&= 4,
\end{aligned}$
\end{itemize}
which completes the proof. 
\end{proof}

\begin{lemma}
\label{lemma:recAnEven}
Let $n\geq3$. Then $A_{2n} = 4A_{n} + 12.$
\end{lemma}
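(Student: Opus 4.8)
The plan is to translate the statement about the $A_m$'s into a statement about the auxiliary quantities $a_{i,j}$, establish the two identities
\[
  A_{2n} = 4a_{1,2}(2n) - 4 \qquad\text{and}\qquad a_{1,2}(2n) = A_n + 4 \qquad (n\geq 3),
\]
and then compose them to get $A_{2n} = 4(A_n+4) - 4 = 4A_n + 12$.

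First I would record that for every $m\geq 3$, Theorem~\ref{thm:PTequalsPS} together with Lemma~\ref{lemma:disjointPijT} gives
\[
  A_m = |P(S,m\times m)| = |P(T,m\times m)| = a_{1,1}(m) + a_{1,2}(m) + a_{2,1}(m) + a_{2,2}(m).
\]
Applying this with $m = 2n$ (legitimate since $n\geq 3$ forces $2n\geq 6$), then using the first line of Lemma~\ref{lemma:second_abc_equalties}, namely $a_{1,2}(2n)=a_{2,1}(2n)=a_{2,2}(2n)$, and the first line of Lemma~\ref{lemma:plus4}, namely $a_{1,1}(2n)=a_{1,2}(2n)-4$, yields $A_{2n} = a_{1,1}(2n) + 3a_{1,2}(2n) = 4a_{1,2}(2n)-4$. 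Note the same computation shows $A_{2k}=4a_{1,2}(2k)-4$ for every $k\geq 2$, which I will reuse below.

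It remains to prove $a_{1,2}(2n)=A_n+4$, which I would split on the parity of $n$. If $n=2m$ with $m\geq 2$, the first equation of \eqref{eq:recsysA} gives $a_{1,2}(4m)=4a_{1,2}(2m)$, while the reused identity gives $A_{2m}=4a_{1,2}(2m)-4$; hence $a_{1,2}(2n)=4a_{1,2}(2m)=A_{2m}+4=A_n+4$. If $n=2m+1$ with $m\geq 2$ (the case $n=3$ being checked directly against Table~\ref{table:initialvalues}), the recursion for $a_{1,2}(4m+2)$ in \eqref{eq:recsysA} reads
\[
  a_{1,2}(4m+2) = a_{1,2}(2m) + b_{1,2}(2m+1) + a_{2,2}(2m+1) + a_{1,2}(2m+2).
\]
I would then rewrite the right-hand side using $b_{1,2}(2m+1) = a_{1,1}(2m+1)$ (Lemma~\ref{lemma:second_abc_equalties}), $a_{1,2}(2m+1) = a_{2,2}(2m) = a_{1,2}(2m)$ (part~3 of Lemma~\ref{lemma:first_abc_equalties} together with Lemma~\ref{lemma:second_abc_equalties}; see also Corollary~\ref{cor:abcEqualities}), and $a_{2,1}(2m+1) = b_{2,2}(2m+1)-4 = a_{1,2}(2m+2)-4$ (Lemma~\ref{lemma:plus4} together with part~1 of Lemma~\ref{lemma:first_abc_equalties}). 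Substituting these three identities into $A_n = A_{2m+1} = a_{1,1}(2m+1)+a_{1,2}(2m+1)+a_{2,1}(2m+1)+a_{2,2}(2m+1)$ collapses it to $A_n+4 = a_{1,1}(2m+1)+a_{1,2}(2m)+a_{1,2}(2m+2)+a_{2,2}(2m+1)$, which is exactly the rewritten expression for $a_{1,2}(4m+2)=a_{1,2}(2n)$.

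Combining the two identities then gives $A_{2n}=4a_{1,2}(2n)-4=4(A_n+4)-4=4A_n+12$. The only genuinely delicate part is the bookkeeping over index ranges: one needs $n\geq 3$ so that the representation $A_m=\sum_{i,j}a_{i,j}(m)$ is available, and $m\geq 2$ in each subcase so that parts~1 and~3 of Lemma~\ref{lemma:first_abc_equalties} apply, which is why the small case $n=3$ is disposed of separately via the table. I expect the odd subcase — selecting exactly the right entries of Lemma~\ref{lemma:first_abc_equalties} and Corollary~\ref{cor:abcEqualities} so that the two four-term sums match — to be the main obstacle, although it is entirely mechanical once the substitutions are chosen.
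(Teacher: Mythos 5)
Your proposal is correct, and its first half coincides with the paper's proof: you use Lemma~\ref{lemma:disjointPijT} and Theorem~\ref{thm:PTequalsPS} to write $A_{m}=a_{1,1}(m)+a_{1,2}(m)+a_{2,1}(m)+a_{2,2}(m)$ for $m\geq 3$, and then Lemma~\ref{lemma:second_abc_equalties} and Lemma~\ref{lemma:plus4} to collapse $A_{2n}$ to $4a_{1,2}(2n)-4$, which is the same computation as the paper's $A_{2n}=a_{1,1}(2n)+3a_{1,2}(2n)=4a_{1,1}(2n)+12$. Where you genuinely differ is the final step: the paper passes directly from $4a_{1,1}(2n)+12$ to $4A_n+12$, i.e.\ it invokes the identity $a_{1,1}(2n)=A_n$ without spelling out a justification, whereas you prove the equivalent fact $a_{1,2}(2n)=A_n+4$ in full detail by a parity split, using the relevant lines of the recursion system \eqref{eq:recsysA}, items 1, 3 and 5 of Lemma~\ref{lemma:first_abc_equalties}, Lemmas~\ref{lemma:second_abc_equalties} and~\ref{lemma:plus4}, and Table~\ref{table:initialvalues} for the small case $n=3$; your substitutions and index ranges ($m\geq 2$ in each parity case so that Lemma~\ref{lemma:first_abc_equalties} applies) check out. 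So your route buys a complete, self-contained verification of the step the paper leaves implicit, at the cost of an extra page of bookkeeping; the paper's version is shorter but relies on the reader seeing that $a_{1,1}(2n)=A_n$ (which also follows more conceptually from $P_{1,1}(T,2n\times 2n)=\{\mu(y):y\in P(T,n\times n)\}$ and injectivity of $\mu$, though the paper does not say this either).
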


\begin{proof}
By Lemma~\ref{lemma:second_abc_equalties} and Lemma~\ref{lemma:plus4} we have
\[
	A_{2n} = a_{1,1}(2n)  + 3 a_{1,2}(2n) = 4 a_{1,1}(2n) + 12  = 4 A_n + 12,
\]
which concludes the proof.
\end{proof}

\begin{lemma}
\label{lemma:recAnOdd}
Let $n\geq3$. Then 
\begin{equation}
\label{eq:recAnOdd}
	A_{2n+1} = 2A_{n} + 2A_{n+1}.
\end{equation}
\end{lemma}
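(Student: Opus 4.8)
The plan is to translate everything into the quantities $a_{i,j}$ of \eqref{eq:defabc}. By Theorem~\ref{thm:PTequalsPS} and Lemma~\ref{lemma:disjointPijT} we have $A_m = a_{1,1}(m)+a_{1,2}(m)+a_{2,1}(m)+a_{2,2}(m)$ for all $m\geq 3$, exactly as in the proof of Lemma~\ref{lemma:recAnEven}. Combining this with Lemma~\ref{lemma:second_abc_equalties} and Lemma~\ref{lemma:plus4} gives $A_m = 4a_{1,2}(m)-4$ for even $m\geq 4$, and combining it with items~3 and~5 of Lemma~\ref{lemma:first_abc_equalties} together with Lemma~\ref{lemma:second_abc_equalties} gives the rewriting $A_{2m+1} = a_{1,1}(2m+1)+a_{1,2}(2m)+a_{1,1}(2m+2)+a_{2,2}(2m+1)$ for $m\geq 2$. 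These two identities let me compare $A_{2n+1}$ with $2A_n+2A_{n+1}$ entirely within the $a$-quantities; I will also use the relations $a_{2,1}(2m)=a_{1,2}(2m)$ and $a_{1,1}(2m+1)=b_{1,2}(2m+1)$ from Lemma~\ref{lemma:second_abc_equalties}.

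The whole argument hinges on one auxiliary identity, which I will denote $(\star_k)$:
\[
	a_{1,1}(2k+1)+a_{2,2}(2k+1)+8 \;=\; a_{1,2}(2k)+a_{1,1}(2k+2),
\]
and which I claim holds for every $k\geq 1$. The plan is to prove $(\star_k)$ by induction on $k$. The base case $k=1$ is read directly off Table~\ref{table:initialvalues} (it says $48+40+8=20+76$). For the step with $k\geq 2$ I split on the parity of $k$: when $k=2j$ I substitute into both sides of $(\star_{2j})$ the lines of the system \eqref{eq:recsysA} expressing $a_{1,1}(4j+1)$, $a_{2,2}(4j+1)$, $a_{1,2}(4j)$, and $a_{1,1}(4j+2)=a_{2,1}(4j+1)$; when $k=2j+1$ I substitute the \eqref{eq:recsysA}-lines for the four $a$-terms at argument $4j+3$, clearing $b_{1,2}(2j+1)=a_{1,1}(2j+1)$ and $a_{2,1}(2j+2)=a_{1,2}(2j+2)$ along the way. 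In both cases, after collecting coefficients, $(\star_k)$ collapses to $(\star_j)$, and the induction closes. This is, up to reordering of terms, the very same computation that drives the main step below, which is what makes the pieces fit together.

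Given $(\star_k)$, the lemma follows by a parity split on $n\geq 3$. If $n=2k$ (so $k\geq 2$), then $2n+1=4k+1$; substituting the lines of \eqref{eq:recsysA} for $a_{1,1}(4k+1)$, $a_{1,2}(4k+1)=a_{2,2}(4k)$, $a_{2,1}(4k+1)$, $a_{2,2}(4k+1)$ and simplifying $a_{2,2}(4k)=4a_{1,2}(2k)$ yields
\[
	A_{4k+1}=3a_{1,1}(2k+1)+9a_{1,2}(2k)+a_{1,1}(2k+2)+3a_{2,2}(2k+1),
\]
while the two identities from the first paragraph give $2A_{2k}+2A_{2k+1}=8a_{1,2}(2k)-8+2\bigl(a_{1,1}(2k+1)+a_{1,2}(2k)+a_{1,1}(2k+2)+a_{2,2}(2k+1)\bigr)$. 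Subtracting, the difference $A_{4k+1}-2A_{2k}-2A_{2k+1}$ equals $a_{1,1}(2k+1)+a_{2,2}(2k+1)+8-a_{1,2}(2k)-a_{1,1}(2k+2)$, which is $0$ by $(\star_k)$. If $n=2k+1$ (so $k\geq 1$), then $2n+1=4k+3$; the same procedure, using the \eqref{eq:recsysA}-lines for the four $a$-terms at argument $4k+3$ and $a_{1,2}(4k+3)=a_{2,2}(4k+2)$, gives
\[
	A_{4k+3}=3a_{1,1}(2k+1)+8a_{1,2}(2k+2)+a_{1,2}(2k)+3a_{2,2}(2k+1)+a_{1,1}(2k+2),
\]
and comparison with $2A_{2k+1}+2A_{2k+2}=2\bigl(a_{1,1}(2k+1)+a_{1,2}(2k)+a_{1,1}(2k+2)+a_{2,2}(2k+1)\bigr)+8a_{1,2}(2k+2)-8$ again leaves the difference equal to $a_{1,1}(2k+1)+a_{2,2}(2k+1)+8-a_{1,2}(2k)-a_{1,1}(2k+2)=0$.

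The main obstacle will be the auxiliary identity $(\star_k)$: the parity-split reduction is mechanical but somewhat lengthy, and one must keep track of the index ranges in which Lemmas~\ref{lemma:second_abc_equalties}--\ref{lemma:first_abc_equalties} apply. The few borderline small instances that fall outside those ranges (notably $n=3,4$ in the main split, and the base case $k=1$ of $(\star_k)$) are settled directly from Table~\ref{table:initialvalues}.
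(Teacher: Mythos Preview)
Your proof is correct. The approach is essentially the same as the paper's: a parity split combined with an induction that reduces everything, via the recursion system \eqref{eq:recsysA} and Lemmas~\ref{lemma:first_abc_equalties}, \ref{lemma:second_abc_equalties}, \ref{lemma:plus4}, to an instance with halved index. The only organisational difference is that the paper inducts directly on the statement $A_{2n+1}=2A_n+2A_{n+1}$ (showing that the difference at level $4p+1$ or $4p+3$ equals the difference at level $2p+1$), whereas you first extract the auxiliary identity $(\star_k)$, prove it by its own halving induction, and then read off both parity cases of the lemma from $(\star_k)$ without further induction. Since your computation shows that $A_{4k+1}-2A_{2k}-2A_{2k+1}$ and $A_{4k+3}-2A_{2k+1}-2A_{2k+2}$ both equal the $(\star_k)$-expression, the two arguments are really the same induction viewed from slightly different angles; your packaging has the minor advantage of making explicit that the two parity cases collapse to a single scalar identity.
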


\begin{proof}
We split the equality in \eqref{eq:recAnOdd} into cases depending on the parity of $n$, that is, 
\begin{equation}
\label{eq:proof_recAnOdd_cases}
\renewcommand{\arraystretch}{1.3}
\begin{array}{l@{\ }c@{\ }l@{\ }}
	A_{4k+1} &=& 2A_{2k}   + 2A_{2k+1}, \\
	A_{4k+3} &=& 2A_{2k+1} + 2A_{2k+2}.
\end{array}
\end{equation}
We prove the equalities in \eqref{eq:proof_recAnOdd_cases} by induction on $k$. The basis cases are direct from Table~\ref{table:initialvalues}. Assume for induction that the equalities in \eqref{eq:proof_recAnOdd_cases} hold for $1\leq k < p$. Then, for the induction step, Lemma~\ref{lemma:first_abc_equalties}, Lemma~\ref{lemma:second_abc_equalties}, Lemma~\ref{lemma:recAnEven}, and the induction assumption give 
\begin{align*}
A_{4p+1} - 2A_{2p} - 2A_{2p+1}  
	&= a_{1,1}(4p+2) -2a_{1,1}(2p) - 2a_{2,1}(2p+1)\\
	&= a_{1,1}(4p+2) -2a_{1,1}(2p) - 2a_{1,1}(2p+2)\\
	&= A_{2p+1} - 2A_{p} - 2A_{p+1}  \\
	&= 0.
\end{align*}
In a similar way, with also the help of Lemma~\ref{lemma:plus4}, we obtain
\begin{align*}
A_{4p+3} -  & \, 2A_{2p+1} - 2A_{2p+2}  \\
&= a_{1,1}(2n+1) + a_{2,2}(2p+1) + 2a_{1,2}(2p+2)\\
&\quad - a_{1,2}(2p+1) - a_{2,1}(2p+1) - 2A_{p+1}\\
&= a_{1,1}(2p+1) + a_{2,2}(2p+1) + 2a_{1,2}(2p+2)\\
&\quad + a_{1,2}(2p+1) + a_{2,1}(2p+1) \\
&\quad - 2a_{1,2}(2p+1) - 2a_{2,1}(2p+1) - 2A_{p+1}\\
&= A_{2p+1} - 2A_{p+1}\\
&\quad + 2a_{1,2}(2p+2) - 2a_{1,2}(2p+1) - 2a_{2,1}(2p+1) \\
&= A_{2p+1} - 2A_{p+1} + 8 - 2a_{1,2}(2p+1) \\
&= A_{2p+1} - 2A_{p+1} - 2a_{1,1}(2p) \\
&= A_{2p+1} - 2A_{p+1} - 2A_{p} \\
&= 0,
\end{align*}
which completes the induction. 
\end{proof}

To conclude; Lemma~\ref{lemma:recAnEven} and Lemma~\ref{lemma:recAnOdd} give us the system of recursion
\begin{equation}
\label{eq:Aeqsys}
	\left\{
	\begin{aligned}
		A_{2n}   &= 4 A_{n} +12 , \\
		A_{2n+1} &= 2 A_{n} + 2 A_{n+1}, 
	\end{aligned}
\right.
\end{equation}
for $n\geq3$, and with the initial conditions $A_1 = 4$, $A_2 = 68$, $A_3 = 184$, $A_4 =316$, and $A_5 = 520$ from Table~\ref{table:initialvalues}. What remains is to prove that \eqref{eq:AnSolution} is a solution to \eqref{eq:Aeqsys}. 

\begin{proof}[Proof of Theorem~\ref{thm:main}]
For the even index case of \eqref{eq:Aeqsys} we use \eqref{eq:AnSolution} to obtain
\begin{align*}
A_{2n} 
	&= 12(2n)^2 + 24 \cdot (2n) \cdot 2^{\lfloor \log_2 (2n-1) \rfloor  } - 16\cdot 4^{\lfloor \log_2 (2n-1) \rfloor  } - 4 \\
	&= 4\cdot 12 \cdot n^2 + 2\cdot 24n \cdot 2^{\lfloor \log_2 2(n-\frac{1}{2}) \rfloor} - 16\cdot 4^{\lfloor \log_2 2(n-\frac{1}{2}) \rfloor} - 4 \\
	&= 4\cdot 12 \cdot n^2 + 4\cdot 24n \cdot 2^{\lfloor \log_2 (n-\frac{1}{2}) \rfloor} - 4\cdot 16\cdot 4^{\lfloor \log_2 (n-\frac{1}{2}) \rfloor} - 4\cdot4 + 12\\
	&= 4 \cdot A_n + 12,
\end{align*}
since $\lfloor \log_2 (n-\frac{1}{2}) \rfloor = \lfloor \log_2 (n-1) \rfloor$, when $n\geq2$ and is an integer. For the odd index case of \eqref{eq:Aeqsys}, we start from the expression 
\begin{align}
A_{2n+1} - 2\cdot A_n - 2 \cdot A_{n+1} 
	&= 48 n\cdot 2^{\lfloor\log_2 n \rfloor} - 48n\cdot 2^{\lfloor\log_2(n-1)\rfloor} \nonumber \\
	&\quad - 32\cdot 4^{\lfloor\log_2 n\rfloor} + 32\cdot 4^{\lfloor\log_2(n-1)\rfloor}.
\label{eq:SolvingAnOdd}
\end{align}
If  $n \neq 2^k$ then $\lfloor \log_2 n \rfloor = \lfloor \log_2 (n-1) \rfloor$ and the terms in \eqref{eq:SolvingAnOdd} cancels out. If $n = 2^k$ then 
$\lfloor \log_2 n \rfloor = 1+\lfloor \log_2 (n-1) \rfloor$ and we have 
\begin{align*}
A_{2^{k+1}+1} 
	&- 2\cdot A_{2^k} - 2 \cdot A_{2^{k}+1} \\ 
	&= 48\cdot 2^k \cdot 2^k - 48\cdot 2^k \cdot 2^{k-1} - 32\cdot 4^k + 32\cdot 4^{k-1} \\
	&= 48\cdot 2^{2k-1} \cdot (2 - 1) - 32\cdot 4^{k-1}\cdot(4-1) \\
	&= 24\cdot 4^{k} - 3\cdot 8\cdot 4^k  \\
	&= 0,
\end{align*}
which concludes to proof.
\end{proof}

\section{Outlook}

Our technique presented in this paper is fairly general and can be applied to find the pattern complexity of other tilings that are obtained via a substitution rule, such as the Table (or Domino) tiling (see \cite{FHG}), the Squiral tiling \cite{FHG}, and the structure generated by the Ulam-Warburton cellular automaton \cite{appelgate2010, singmaster, ulam1962, warburton2019, warburton2002}, just to mention few examples. However, things are more subtle; it is easy to find substitution rules where the method of this paper does not seem to work, i.e the Chair tiling, see \cite{FHG}. A different technique is applied by Galanov \cite{galanov} who looks at the pattern complexity of the Robinson tiling \cite{robinson}.

Our work here depends heavily on the extension properties, as discussed in section~\ref{sec:extensions}. One question is to find necessary and sufficient conditions on the substitution rule in order to be able to calculate the pattern complexity in our way, and to classify different types of complexity functions that might appear.

\noindent\rule{10em}{0.5pt}

\texttt{\small johannilsson514@gmail.com}

\begin{thebibliography}{99}
{\small
\bibitem{allouche1992}
J.-P. Allouche, The number of factors in a paperfolding sequence, 
\emph{Bull. Austral. Math. Soc.} \textbf{46}, (1992) pp. 23--32.

\bibitem{allouch1995}
J.-P. Allouche and M. Mend\`{e}s-France,
Automatic Sequences. In \cite{axel} pp. 293--367.

\bibitem{allouch2023}
J.-P. Allouche and J. Shallit, 
\emph{Automatic Sequences, Theory, Applications, Generalizations}, 
Cambridge University Press, Cambridge (2003).

\bibitem{appelgate2010}
D. Applegate, O. E. Pol, and N. J. A. Sloane,
The Toothpick Sequence and Other Sequences from Cellular Automata. 
Congressus Numerant. \textbf{206} (2010), pp. 157--191.  
arXiv:1004.3036, (2010).

\bibitem{BG13}
M. Baake, and U. Grimm, 
Aperiodic Order. Volume 1: A Mathematical Invitation, Encyclopedia of Mathematics and its Applications 
No. \textbf{149} Cambridge University Press, Cambridge (2013).

\bibitem{ben}
S. I. Ben-Abraham, A. Quandt and D. Shapiraa, 
Multidimensional paperfolding systems, 
\emph{Acta Cryst.} (2013) A\textbf{69}, pp. 123--130.

\bibitem{dekking2012}
M. Dekking, Paperfolding morphisms, planefilling curves and fractal tiles, 
\emph{Theo. Comp. Sci.} (2012) \textbf{414}, pp 20--37.

\bibitem{dekking1982}
M. Dekking, M. Mend\`{e}s-France and A.J. van der Poorten, Folds!. 
\emph{Math. Intelligencer} (1982) \textbf{4}, pp. 130--138.

\bibitem{axel}
F. Axel and D. Gratias (eds.) (1995) 
\emph{Beyond Quasicrystals} (Springer, Berlin, and EDP Sciences, Les Ulis).

\bibitem{galanov}
I. Galanov, 
The exact complexity of a Robinson tiling.\\
Preprint arXiv:2201.00811, (2022). 

\bibitem{FHG}
D. Frettl\"{o}h, E. Harriss, F. G\"{a}hler: Tilings encyclopedia,  \\
{\small\tt  https://tilings.math.uni-bielefeld.de}.

\bibitem{fgjn}
F. G\"{a}hler, J. Nilsson , Substitution Rules for Higher-Dimensional Paperfolding Structures. 
Preprint arXiv:1408.4997, (2014).

\bibitem{gardner}
M. Gardner, 
Mathematical games, 
\emph{Sci. Am.} (1967a) \textbf{216}, 124--129 and \emph{Sci. Am.} (1967b) \textbf{216}, pp. 116--123.

\bibitem{robinson}
R. Robinson, 
Undecidability and nonperiodicity for tilings of the plane, 
\emph{Invent. Math.} \textbf{12} (1971) pp. 177--209.
	
\bibitem{singmaster}
D. Singmaster, 
On the cellular automaton of Ulam and Warburton, 
\emph{M500 Magazine of The Open University}, 
\textbf{195} (2003), pp. 2--7.

\bibitem{oeis}
N. J. A. Sloane, The On-Line Encyclopedia of Integer Sequences. Published electronically at 
{\tt http://oeis.org/}.

\bibitem{ulam1962}
S. M. Ulam, 
On some mathematical problems connected with patterns of growth of figures, pp. 215--224 of R. E. Bellman, ed., 
Mathematical Problems in the Biological Sciences, 
\emph{Proc. Sympos. Applied Math.}, \textbf{14}, Amer. Math. Soc., (1962).	

\bibitem{warburton2019}
M. Warburton, 
Ulam-Warburton Automaton - Counting Cells with Quadratics.
Preprint arXiv:1901.10565, (2019).
	
\bibitem{warburton2002}
M. Warburton, 
One-edge connections, 
\emph{M500 Magazine of The Open University},
\textbf{188}, (2002), p. 11.
}
\end{thebibliography}
\end{document}